\def\g{{\rm{g}}}
\newtheorem{theorem}{Theorem}
\newtheorem{corollary}[theorem]{Corollary}
\newtheorem{lemma}[theorem]{Lemma}
\newtheorem{proposition}[theorem]{Proposition}
\theoremstyle{definition}
\newtheorem{example}[theorem]{Example}
\def\max{{\rm max}}
\def\min{{\rm min}}
\def\msg{{\rm msg}}
\def\m{{\rm{m}}}
\def\e{{\rm{e}}}
\def\e{{\rm{e}}}
\def\g{{\rm{g}}}
\def\n{{\rm{n}}}
\def\F{{\rm{F}}}
\def\N{{\mathbb N}}
\def\Z{{\mathbb Z}}
\def\F{{\rm F}}
\begin{document}

\title[Numerical semigroups of coated odd elements]{Numerical semigroups of coated odd elements}

\author{J. C. Rosales}
\address{Departamento de \'Algebra, Universidad de Granada, E-18071 Granada, Spain}
\email{jrosales@ugr.es}

\author{M. B. Branco}
\address{Departamento de Matemática, Universidade de Évora, 7000 Évora, Portugal}
\email{mbb@uevora.pt}

\author{M. A. Traesel}
\address{Departamento de Matemática, Instituto Federal de São Paulo, Caraguatatuba, SP, Brazil}
\email{marciotraesel@ifsp.edu.br}

\thanks{The first author was partially supported by MTM-2017-84890-P and by Junta de Andalucia group FQM-343. The second author was partially supported by CIMA -- Centro de
Investiga\c{c}\~{a}o em Matem\'{a}tica e Aplica\c{c}\~{o}es,
Universidade de \'{E}vora, project UIDB/04674/2020 (Funda\c{c}\~{a}o
para a Ci\^{e}ncia e Tecnologia). 2010 Mathematics Subject Classification: 20M14, 11D07.}

\begin{abstract}
 A numerical semigroup $S$ is coated with odd elements (Coe-semigroup), if $\left\{x-1, x+1\right\}\subseteq S$ for all odd element $x$ in $S$. In this note, we will study this kind of numerical semigroups. In particular, we are interested in the study of the Frobenius number, gender and embedding dimension of a numerical semigroup of this type.
\end{abstract}

\keywords{Frobenius number, gender, Numerical semigroups, Frobenius family, Coe-semigroups  and tree (associated to an Coe-semigroup).}

\maketitle

\section{Introduction}\label{S1}

Let $\Z$ be the set of integers an let $\N=\left\{ z\in \Z ~|~ z\geq 0\right\}$.
A submonoid of $(\N,+)$  is a subset of $\N$ closed under addition, containing the zero element. A submonoid with finite complement in $\N$ is a numerical semigroup.

A numerical semigroup, $S$, is coated with odd elements, if $\left\{x-1, x+1\right\}\subseteq S$ for all odd element $x$ in $S$. Henceforth known as \textbf{Coe-semigroup}, which  will be the object of study in this work.

If $S$ is a numerical semigroup, then $\m(S)=\min(S\backslash\{0\})$, $\F(S)=\max(\Z\backslash S)$ and $\g(S)$ the cardinalty of $\N\backslash S$ are three importants invariants of $S$ known as \textbf{multiplicity}, \textbf{Frobenius number} and \textbf{gender} of $S$, respectively.

If $\mathcal  A$  is a nonempty subset of   $\N$, we denote by $\left\langle \mathcal A\right\rangle$ the submonoid of $(\N,+)$ generated by $\mathcal A$, that is, 
\[\langle \mathcal A\rangle=\left\{\sum_{i=1}^n \lambda_i\,a_i ~|~ n\in\N\backslash\left\{0\right\}, ~ a_i\in \mathcal A,~ \lambda_i\in \N ~\text{for all}  ~i\in\{1,\ldots, n\}  \right\}.\]

In \cite[Lemma $2.1$]{libro}, we can see that $\langle \mathcal A\rangle$ is a numerical semigroup if and only if $\gcd(\mathcal A)=1$.

If $M$ is a submonoid  of $(\N,+)$ and $M=\left\langle \mathcal A\right\rangle$ then we say that $\mathcal A$ is a system of generators of $M$. Moreover, if $M\neq \left\langle \mathcal B\right\rangle$ for all $\mathcal B \varsubsetneq \mathcal A$, then  we say that $\mathcal A$ is a minimal system of generators of $S$.  In \cite[Corollary 2.8]{libro}, we see that, every submonoid of $(\N,+)$ admits an unique minimal system of generators, which is finite. We denote by $\msg (M)$ the minimal system of generators of $M$ and its elements are called minimal generators. The cardinality of $\msg (M)$ is called the embedding dimension of $M$ and it is denoted by $\e(M)$.

The Frobenius problem (see \cite{alfonsin}) consists of finding formulas for the Frobenius number and the gender of a numerical semigroup in terms of its minimal system of generators. This problem was solved in \cite{Sylvester}, for numerical semigroups  with embedding dimension two. At present, this problem is open for embedding dimension greater than or equal to three.

In \cite{wilf} Wilf conjectured that if $S$ is a numerical semigroup  then $\e(S) \g(S)\leq (\e(S)-1) (\F(S)+1)$. This question is still widely open and it is  one of the most important problems in numerical semigroups theory. A very good source of the state of the art of this problem  is  \cite{delgado}.

Denote by $\mathscr C=\left\{S ~|~ S ~ \text{is an Coe-semigroup} \right\}$. In Section $2$, we will show that, if $S\in \mathscr C$ then  $S\cup\{\F(S), \F(S)-1\}\in \mathscr C$. This result will be used in Section 3, to order the elements of $\mathscr C$ in a rooted tree.
We will characterize the sons of a vertex in this tree and this will allow us to give an algorithm procedure to obtain recursively the elements of $\mathscr C$.

In Section $4$,  we will see  that given $k$ an odd positive integer, then  $\mathscr C(k)=\left\{S ~|~ S ~ \text{is an Coe-semigroup and} ~k\in S\right\}$ is a finite set. Moreover,   given $p$ a positive integer, $\mathscr C\big(Frob\leq p\big)=\left\{S ~|~ S ~ \text{is an Coe-semigroup with} ~ \F(S)\leq p\right\}$ and  $\mathscr C\big(gen\leq p\big)=\left\{S ~|~ S ~ \text{is an Coe-semigroup with} ~ \g(S)\leq p\right\}$ are also finite sets. Following the same line of previous section, we are going to order these three families of semigroups in a rooted tree.

An Coe-monoid is a submonoid of $(\N,+)$ which can be expressed as intersection of Coe-semigroups. It is clear that, the intersection of Coe-monoids is an Coe-monoid. This allows us to introduce the smallest Coe-monoid containing a subset $X$ of $\N$, denoted by $Coe(X)$. In Section $5$, we will show that  a submonoid $M$ of $(\N,+)$  is an  Coe-monoid if and only if either $M\subseteq \left\{2k ~|~k\in\N\right\}$ or $M$ is an Coe-semigroup. We will give an algorithm to compute $Coe(X)$ and will see that $Coe(X)$ is an Coe-semigroup if and only if $X$ contains at least an odd element.

 In \cite[Proposition $2.10$]{libro}, it is shown that if $S$ is a numerical semigroup, then $\e(S)\leq \m(S)$.
We say that $S$ has maximal embedding dimension (MED-semigroup) if $\e(S)=\m(S)$. This class of numerical semigroups has been well studied in the semigroup literature  (see for instance \cite{barucci}).  In Section $6$, we will study the Coe-semigroups which are MED-semigroups.

Finally, in Section $7$, we will study the Coe-semigroups with an unique odd minimal generator. We will show that this kind of numerical semigroups are equal to the set $\big\{T ~|~ T=2S\cup \big(\{2s+1\}+2S\big), ~ S ~\text{is a numerical semigroup  and}~ \{s,s+1\}\subseteq S\big\}$. We will give formulas for $\F(T)$, $\g(T)$ and $\e(T)$ as a function of $\F(S)$, $\g(S)$ and $\e(S)$. From these results, we will prove that if $S$ verifies the Wilf's conjecture, then $T$ also verifies the same conjecture. In addition, we will solve the Frobenius problem for Coe-semigroups  with embedding dimension three.


\section{First results }\label{S2}
Note that $\N$  is an Coe-semigroup with $\m(\N)=1$ and $\F(\N)=-1$. Hence, if $S$ is a numerical semigroup and $S\neq \N$, then we let us consider  that $\m(S)\in \N\backslash\{0,1\}$ and $\F(S)\in \N\backslash\{0\}$.
 
\begin{proposition}\label{1}
If  $S$ is an Coe-semigroup  and $S\neq \N$, then $\m(S)$ is an even integer and $\F(S)$ is an odd integer.
\end{proposition}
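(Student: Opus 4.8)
The plan is to prove the two assertions separately, each by a short argument by contradiction that appeals directly to the defining property of a Coe-semigroup; no auxiliary results beyond the definitions are needed.

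For the multiplicity, I would assume toward a contradiction that $\m(S)$ is odd. Since $\m(S)\in S$ is then an odd element, the Coe condition forces $\m(S)-1\in S$. But $\m(S)-1$ is a nonnegative integer strictly smaller than $\m(S)=\min(S\setminus\{0\})$, so the only element of $S$ it can be is $0$; hence $\m(S)-1=0$, that is $\m(S)=1$. Then $1\in S$ and closure under addition give $S=\langle 1\rangle=\N$, contradicting the hypothesis $S\neq\N$. Therefore $\m(S)$ must be even.

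For the Frobenius number, I would again argue by contradiction, assuming $\F(S)$ is even. By definition $\F(S)=\max(\Z\setminus S)$, so every integer exceeding $\F(S)$ belongs to $S$; in particular $\F(S)+1\in S$, and this element is odd because $\F(S)$ is even. Applying the Coe condition to the odd element $\F(S)+1$ yields $(\F(S)+1)-1=\F(S)\in S$, which contradicts $\F(S)\notin S$. Hence $\F(S)$ is odd.

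I do not anticipate a genuine obstacle here: both statements fall out immediately from the definition once one writes down the right odd element to test. The only point that merits a moment of care is the boundary case in the multiplicity argument, where one must observe that the smaller element $\m(S)-1$ produced by the Coe condition cannot be a new nonzero generator and must therefore equal $0$, collapsing $S$ to $\N$.
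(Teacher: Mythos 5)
Your proof is correct and follows essentially the same route as the paper's: both parts argue by contradiction, applying the Coe condition to the odd element $\m(S)$ (respectively $\F(S)+1$) to produce $\m(S)-1\in S$ (respectively $\F(S)\in S$). The only cosmetic difference is at the boundary: the paper first notes $\m(S)\geq 2$ so that $\m(S)-1\in S$ directly contradicts minimality, whereas you let $\m(S)-1$ collapse to $0$ and then derive $S=\N$; both are valid.
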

\begin{proof}
As $S\neq \N$, then $\m(S)\geq 2$ and $\F(S)\geq 1$. If $\m(S)$ is odd, then we get that $\m(S)-1\in S$, a contradiction. If $\F(S)$ is  even, then $\F(S)+1$ is an odd element belongs to $S$. Hence, we obtain that $\big(\F(S)+1\big)-1=\F(S)\in S$, a contradiction again.
\end{proof}


\begin{proposition}\label{4} 
Let $S$ be a numerical semigroup. The following conditions are equivalent.
\begin{enumerate}
\item $S$ is an Coe-semigroup.
\item   $\left\{x-1, x+1\right\}\subseteq S$ for all odd element $x$ in $\msg(S)$.
\end{enumerate}
\end{proposition}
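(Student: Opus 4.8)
The plan is to prove the equivalence by observing that $(1)\Rightarrow(2)$ is immediate and concentrating effort on the converse. Since $\msg(S)\subseteq S$, every odd minimal generator is in particular an odd element of $S$; hence if $S$ is a Coe-semigroup, the coating condition certainly holds on $\msg(S)$, which gives $(1)\Rightarrow(2)$ with no work.

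For $(2)\Rightarrow(1)$, suppose $\{x-1,x+1\}\subseteq S$ for every odd $x\in\msg(S)$, and let $x$ be an arbitrary odd element of $S$. Writing $\msg(S)=\{a_1,\dots,a_n\}$, I would express $x=\sum_{i=1}^{n}\lambda_i a_i$ with $\lambda_i\in\N$. The key observation is that the parity of $x$ forces at least one odd generator to occur with positive coefficient: reducing modulo $2$ gives $x\equiv\sum_{a_i\ \mathrm{odd}}\lambda_i\pmod 2$, and since the left side is odd, this sum is odd, so there exists an index $j$ with $a_j$ odd and $\lambda_j\geq 1$.

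Fixing such a generator $a_j$, I note that $x-a_j$ is again a nonnegative-integer combination of the minimal generators (the coefficient of $a_j$ drops to $\lambda_j-1\geq 0$), so $x-a_j\in S$. By hypothesis $(2)$ both $a_j-1$ and $a_j+1$ lie in $S$. Using that $S$ is closed under addition, I would then write $x+1=(x-a_j)+(a_j+1)$ and $x-1=(x-a_j)+(a_j-1)$, concluding that both $x-1$ and $x+1$ belong to $S$. As $x$ was an arbitrary odd element of $S$, this shows $S$ is a Coe-semigroup.

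The only step requiring genuine care is the parity argument isolating an odd minimal generator $a_j$ with $\lambda_j\geq 1$; everything else is bookkeeping. Once that generator is in hand, the decompositions $x\pm 1=(x-a_j)+(a_j\pm 1)$ reduce the problem entirely to the closure of $S$ under addition together with the hypothesis on minimal generators, so I do not anticipate an obstacle beyond correctly handling the parity.
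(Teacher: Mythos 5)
Your proof is correct and takes essentially the same approach as the paper: the paper's argument likewise picks an odd minimal generator $x$ with $s-x\in S$ and writes $s\pm 1=(s-x)+(x\pm 1)$. The only difference is that the paper dismisses the existence of such a generator with ``clearly,'' whereas you supply the parity argument (reducing the representation modulo $2$) that justifies it.
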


\begin{proof}
$1)~ \textit{implies}~ 2)$ Trivial. 

$2)~ \textit{implies}~ 1)$ Let $s$ be an odd element in $S$. Clearly, there exists an odd element $x$ in $\msg(S)$, such that $s-x\in S$.  Then, we obtain that $s-1=x-1+s-x\in S$ and $s+1=x+1+s-x\in S$ and so $S$ is an Coe-semigroup.
\end{proof}

\begin{example}\label{5}
 By applying Proposition \ref {4}, we have that   $S=\langle 4,6,7\rangle$ is an Coe-semigroup, because $\left\{7-1, 7+1\right\}\subseteq S$.
\end{example}

The next result has immediate proof.

\begin{lemma}\label{6}
If $S$ is a numerical semigroup  such that $S\neq \N$, then $S\cup \{\F(S)\}$ and $S\cup \{\F(S)-1, \F(S)\}$ are also numerical semigroups.
\end{lemma}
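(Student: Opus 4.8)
The plan is to invoke the basic characterization that a set $T\subseteq\N$ is a numerical semigroup exactly when $0\in T$, $T$ is closed under addition, and $\N\setminus T$ is finite. Write $f=\F(S)$. Both candidates $S\cup\{f\}$ and $S\cup\{f-1,f\}$ contain $0$ since $0\in S$, and each has finite complement because its complement is the finite set $\N\setminus S$ with one or two elements removed. Thus the only property left to check is closure under addition, and this is where all the (modest) work lies.

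For the closure argument I would rely on two elementary observations. First, by the definition of the Frobenius number every integer exceeding $f$ belongs to $S$, that is $\{f+1,f+2,\ldots\}\subseteq S$. Second, since $S\neq\N$ we have $\m(S)\geq 2$, so every nonzero element of $S$ is at least $2$. For $T=S\cup\{f\}$, the sums of two elements of $S$ already lie in $S$, while any sum $f+t$ with $t$ a nonzero element of $T$ satisfies $f+t\geq f+1$ and hence lies in $S$; the sum $f+0=f$ lies in $T$. Therefore $T$ is closed, and this case involves no edge cases since $2f>f$ for every $f\geq 1$.

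For $T'=S\cup\{f-1,f\}$ the same reasoning disposes of every sum except those introduced by $f-1$. A sum $(f-1)+t$ with $t\in S\setminus\{0\}$ satisfies $(f-1)+t\geq f+1$ by the bound $\m(S)\geq 2$, so it lies in $S$; the only remaining sums are $(f-1)+(f-1)=2f-2$ and $(f-1)+f=2f-1$. These exceed $f$, and so lie in $S$, as soon as $f\geq 3$, while for the degenerate values $f\in\{1,2\}$ one checks directly that $T'=\N$. I expect this last point, namely the self-sum $(f-1)+(f-1)$ together with the small-$f$ edge cases, to be the only step requiring any care; everything else is immediate, matching the authors' remark that the result has an immediate proof.
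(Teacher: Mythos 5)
Your proof is correct and is precisely the argument the paper has in mind when it declares the result to have "immediate proof": closure under addition follows because every integer exceeding $\F(S)$ lies in $S$, and the few sums involving $\F(S)-1$ and $\F(S)$ are checked directly (your handling of the small-$f$ cases is fine, and in fact even there the sums $2\F(S)-2$ and $2\F(S)-1$ land back in $S\cup\{\F(S)-1,\F(S)\}$, so no separate case split is strictly needed).
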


Observe that, if $S$ is an Coe-semigroup such that $S\neq \N$, then  we have that  $S\cup \{\F(S)\}$ is not necessary an Coe-semigroup. In fact,  $S=\left\{0,6,10,\rightarrow\right\}$ is an Coe-semigroup, but $S\cup \{\F(S)\}= \left\{0,6,9,10,\rightarrow\right\}$ is not  an Coe-semigroup. In addition,  if $S$ is an Coe-semigroup then $\F(S)-1$ may or may not belong to $S$. Indeed,  we have that $S=\left\{0,6,10,\rightarrow\right\}$ is an Coe-semigroup such that $\F(S)-1\not\in S$ and  $T= \left\{0,6,8,10,\rightarrow\right\}$ is an Coe-semigroup such that $\F(T)-1\in T$.

\begin{lemma}\label{7}
If $S$ is an Coe-semigroup  such that $S\neq \N$, then $S\cup \{\F(S)-1, \F(S)\}$ is an Coe-semigroup.
\end{lemma}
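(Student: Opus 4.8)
The plan is to verify the defining Coe-condition directly for the enlarged set, after first confirming that it is a numerical semigroup. Write $T = S \cup \{\F(S)-1, \F(S)\}$. By Lemma~\ref{6}, $T$ is a numerical semigroup, so it only remains to check that $\{x-1, x+1\} \subseteq T$ for every odd element $x$ of $T$.

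The key preliminary observation I would make is that, by Proposition~\ref{1}, $\F(S)$ is odd (here one uses $S \neq \N$). Consequently $\F(S)-1$ is even, and the only odd element that $T$ contains but $S$ does not is $\F(S)$ itself. This reduces the verification to two cases.

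First, for every odd element $x \in S$, since $S$ is already a Coe-semigroup we have $\{x-1, x+1\} \subseteq S \subseteq T$. Second, for the element $x = \F(S)$, the predecessor $\F(S)-1$ belongs to $T$ by construction, and the successor $\F(S)+1$ lies in $S$ because every integer strictly greater than $\F(S)$ belongs to $S$; hence $\{\F(S)-1, \F(S)+1\} \subseteq T$. This exhausts all the odd elements of $T$, and therefore $T$ is a Coe-semigroup.

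I expect no serious obstacle here: once Proposition~\ref{1} has pinned down the parity of $\F(S)$, the argument falls into place. The only point deserving a little care is to note explicitly that the newly adjoined element $\F(S)-1$ is \emph{even}, so that it imposes no new Coe-constraint and $\F(S)$ is the sole odd vertex one must test.
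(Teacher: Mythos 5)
Your proof is correct and follows essentially the same route as the paper: invoke Lemma~\ref{6} for the numerical semigroup property, use Proposition~\ref{1} to see that $\F(S)$ is odd, and then observe that $\{\F(S)-1,\F(S)+1\}$ lies in the enlarged set. Your write-up merely makes explicit the case analysis (old odd elements versus the newly adjoined $\F(S)$) that the paper leaves implicit.
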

\begin{proof}
Applying  Lemma \ref{6}, we obtain that $S\cup \{\F(S)-1, \F(S)\}$ is  a numerical semigroup. We know that, by Proposition \ref{1}, $\F(S)$ is an odd  integer. As $\{\F(S)-1, \F(S)+1\}\subseteq S\cup \{\F(S)-1, \F(S)\}$, we deduce that $S\cup \{\F(S)-1, \F(S)\}$ is an Coe-semigroup.
\end{proof}

As a consequence of Lemma \ref{7}, we have all the ingredients needed to give a recursive way of calculating a sequence of Coe-semigroups. For a given an Coe-semigroup $S$:

\begin{itemize}
\item $S_0=S$,
\item $S_{n+1}=\left\{ \begin{array}{ll}
  S_n\cup\left\{\F(S_n)-1,\F(S_n)\right\} & \hbox{if } S_n \neq \N  \\
  \N & \hbox{otherwise}.
    \end{array}\right.$
\end{itemize}

The next result  is trivial.

\begin{proposition}\label{8}
If $S$ is an Coe-semigroup, then there exists a sequence of Coe-semigroups, $S=S_0\subsetneq S_1 \subsetneq\cdots \subsetneq S_k=\N$. Furthermore, the cardinality of $S_{i+1}\backslash S_i$ is $1$ or $2$ for all $i\in\left\{0,1,\ldots,k-1\right\}$.
\end{proposition}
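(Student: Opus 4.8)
The plan is to build the chain explicitly using the recursion defined immediately before the statement, namely $S_0=S$ and $S_{n+1}=S_n\cup\{\F(S_n)-1,\F(S_n)\}$ whenever $S_n\neq\N$ (and $S_{n+1}=\N$ once $S_n=\N$). First I would verify that this recursion never leaves the class of Coe-semigroups: this is exactly the content of Lemma \ref{7}, which guarantees that $S_n\cup\{\F(S_n)-1,\F(S_n)\}$ is again an Coe-semigroup whenever $S_n$ is an Coe-semigroup with $S_n\neq\N$. By induction, every term $S_n$ produced by the recursion is an Coe-semigroup, so the entire chain lives in $\mathscr C$.

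Next I would check that each inclusion $S_n\subsetneq S_{n+1}$ is strict. This is immediate from the definition of the Frobenius number: $\F(S_n)=\max(\Z\setminus S_n)\notin S_n$, whereas $\F(S_n)\in S_{n+1}$ by construction. Hence $S_{n+1}\setminus S_n$ is nonempty and the chain is strictly increasing as long as we have not yet reached $\N$.

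For termination I would argue by a strictly decreasing invariant. Since $\F(S_n)$ is a gap of $S_n$ that becomes an element of $S_{n+1}$, we have $\g(S_{n+1})<\g(S_n)$; more precisely the gender drops by $1$ or $2$ at each step depending on whether $\F(S_n)-1$ was already in $S_n$. As $\g$ takes values in $\N$ and strictly decreases, after finitely many steps we reach a term of gender $0$, which forces $S_k=\N$ for some $k$. This yields the finite chain $S=S_0\subsetneq S_1\subsetneq\cdots\subsetneq S_k=\N$.

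Finally, for the cardinality claim I would simply observe that $S_{i+1}\setminus S_i\subseteq\{\F(S_i)-1,\F(S_i)\}$, so $|S_{i+1}\setminus S_i|\leq 2$; it equals $2$ when $\F(S_i)-1\notin S_i$ and equals $1$ when $\F(S_i)-1\in S_i$ (the concrete examples $\{0,6,10,\rightarrow\}$ and $\{0,6,8,10,\rightarrow\}$ preceding Lemma \ref{7} already illustrate both cases). I do not expect any genuine obstacle here: the only point needing a word of justification is termination, and that is handled by the strict descent of the gender; everything else is bookkeeping on top of Lemma \ref{7}.
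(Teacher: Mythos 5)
Your proof is correct and is exactly the argument the paper has in mind: the paper states Proposition \ref{8} without proof (calling it trivial), relying on the recursive construction $S_{n+1}=S_n\cup\{\F(S_n)-1,\F(S_n)\}$ and Lemma \ref{7}, which is precisely what you spell out. Your only additions — strictness via $\F(S_n)\notin S_n$, termination via the strictly decreasing gender, and the cardinality bound $S_{i+1}\setminus S_i\subseteq\{\F(S_i)-1,\F(S_i)\}$ — are the routine details the paper omits.
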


We will refer to the previous sequence as the chain of Coe-semigrops associated to $S$ and $k$ is the length of this chain, denoted by $l_S$.

The following result is easy to prove.

\begin{proposition}\label{9}
Let $S$ be an Coe-semigroup. The cardinality of the set $\left\{x\in \N\backslash S ~| ~x ~\text{is odd}\right\}$ is equal to $l_S$.
\end{proposition}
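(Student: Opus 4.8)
The plan is to track how the number of odd gaps changes along the chain $S=S_0\subsetneq S_1\subsetneq\cdots\subsetneq S_k=\N$ furnished by Proposition \ref{8}, and to show that each step of the chain removes exactly one odd gap. For a numerical semigroup $T$, write $o(T)$ for the cardinality of $\left\{x\in\N\backslash T ~|~ x~\text{is odd}\right\}$; the goal is then to establish $o(S)=l_S=k$.

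First I would recall that, by Proposition \ref{8} (which itself rests on Lemma \ref{7}), every term $S_n$ of the chain is again an Coe-semigroup. Hence, whenever $S_n\neq\N$, Proposition \ref{1} applies and tells us that $\F(S_n)$ is odd. Now examine the single step $S_{n+1}=S_n\cup\{\F(S_n)-1,\F(S_n)\}$: passing from $S_n$ to $S_{n+1}$ only adjoins elements, so no new gaps can ever be created and the set of odd gaps can only shrink. Of the two elements adjoined, $\F(S_n)$ is odd while $\F(S_n)-1$ is even, so the only odd number whose membership can possibly change is $\F(S_n)$ itself.

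The crux is therefore to check that $\F(S_n)$ really is an odd gap of $S_n$ that becomes a non-gap in $S_{n+1}$. This is immediate: $\F(S_n)=\max(\Z\backslash S_n)$ is by definition a gap of $S_n$, it is odd by the previous paragraph, and it lies in $S_{n+1}$ by construction. Consequently exactly one odd gap disappears at each step, that is, $o(S_{n+1})=o(S_n)-1$ for every $n$ with $S_n\neq\N$.

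Finally I would unwind this telescoping relation along the whole chain. Since $S_k=\N$ has no gaps at all, $o(S_k)=0$, and applying $o(S_{n+1})=o(S_n)-1$ for $n=0,1,\ldots,k-1$ yields $o(S)=o(S_0)=o(S_k)+k=k=l_S$, as claimed. I do not expect any genuine obstacle here; the only point demanding a moment of care is the observation that adjoining the even element $\F(S_n)-1$ leaves the count of odd gaps untouched, so that each step of the chain accounts for precisely one odd gap.
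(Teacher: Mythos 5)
Your proof is correct, and it is exactly the argument the paper has in mind: the paper states Proposition \ref{9} without proof (``easy to prove''), and the intended justification is precisely your observation that each step of the chain from Proposition \ref{8} adjoins the odd number $\F(S_n)$ (odd by Proposition \ref{1}) together with the even number $\F(S_n)-1$, so each step eliminates exactly one odd gap, and telescoping down to $\N$ gives the count $l_S$.
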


\begin{example}\label{10}
 Clearly $S=\langle 6,8,13,14,15,17\rangle= \left\{0,6,8,12,\rightarrow\right\}$ is an Coe-semigroup. The chain of Coe-semigrops associated to $S$ is the following: $S=S_0\subsetneq S_1=\left\{0,6,8,10\rightarrow\right\} \subsetneq  S_2=\left\{0,6,8,\rightarrow\right\}\subsetneq S_3=\left\{0,6,\rightarrow\right\}\subsetneq  S_4=\left\{0,4\rightarrow\right\}\subsetneq  S_5=\left\{0,2\rightarrow\right\}\subsetneq  S_6=\N$. The cardinality of $\left\{x\in \N\backslash S ~| ~x ~\text{is odd}\right\}=\left\{1,3,5,7,9,11\right\}$ is $6=l_S$.
\end{example}


\section{The tree of  Coe-semigroups}\label{S3}

A graph $G$ is a pair $(V ,E)$ where $V$ is a nonempty set and $E$ is a subset of $\left\{(u,v)~|~ u,v \in V, u\neq v\right\}$. The elements of $V$ and $E$ are called vertices and edges, respectively. A path of length $n$ connecting the vertices $u$ and $v$ of $G$ is a sequence of $n$ distinct edges of the form $(v_0,v_1), (v_1,v_2),\ldots, (v_{n-1},v_n)$ with $v_0 = u$ and $v_n= v$. 

A graph $G$ is a tree if there exists a vertex $r$ (known as the root of $G$) such that for every other vertex $v$ of $G$, there exists a unique path connecting $v$ and $r$. If $(u,v)$ is an edge of the tree then we say that $u$ is a son of $v$.

Our main goal in this section will be to build the tree whose vertex set is  $\mathscr C=\left\{S ~|~ S ~ \text{is an Coe-semigroup} \right\}$.

We define the graph $G\big(\mathscr C\big)$ as the graph whose vertices are elements of $\mathscr C$ and $(S, T)\in \mathscr C\times \mathscr C$ is an edge if $T=S\cup \left\{\F(S)-1,\F(S)\right\}$.  

As a consequence of Proposition \ref{8}, we have the following.

\begin{proposition}\label{11}
The graph $G\big(\mathscr C\big)$ is a tree with root equal to $\N$.  
\end{proposition}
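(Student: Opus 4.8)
The plan is to verify the definition of a tree directly, using the recursive chain construction guaranteed by Proposition~\ref{8}. Recall that a graph is a tree with root $r$ precisely when every vertex other than $r$ admits a unique path to $r$. Here the candidate root is $\N$, so I must establish two things: existence of a path from each $S \in \mathscr{C}$ with $S \neq \N$ to $\N$, and uniqueness of that path.

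For existence, I would invoke Proposition~\ref{8} directly. Given $S \in \mathscr{C}$, the chain $S = S_0 \subsetneq S_1 \subsetneq \cdots \subsetneq S_k = \N$ produced there satisfies $S_{i+1} = S_i \cup \{\F(S_i)-1, \F(S_i)\}$ for each $i$, which is exactly the condition for $(S_i, S_{i+1})$ to be an edge of $G(\mathscr{C})$. Thus the chain is literally a path in the graph connecting $S$ to the root $\N$, and existence is immediate.

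The substantive part is uniqueness, and this is where I expect the main obstacle to lie. The key structural observation is that each vertex $S \neq \N$ has a uniquely determined \emph{parent}: the edge $(S, T)$ forces $T = S \cup \{\F(S)-1, \F(S)\}$, so $S$ has exactly one outgoing edge (one edge in which it plays the role of the first coordinate, i.e.\ the son). Since $\F(S)$ is finite and $T \supsetneq S$, this parent $T$ is a well-defined Coe-semigroup strictly larger than $S$, and by Lemma~\ref{7} it indeed lies in $\mathscr{C}$. I would then argue that any path from $S$ to $\N$ must begin with this forced first edge, and that after moving to the parent the same reasoning applies; formally, one shows by induction that a path is completely determined by its starting vertex because at every step the next vertex (the parent) is unique. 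Because each step strictly increases the semigroup, the chain cannot revisit a vertex or cycle, and it terminates at $\N$ in finitely many steps. Hence the path from $S$ to $\N$ is unique.

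Finally I would note that $\N$ itself is a valid root: it has no parent, since $\F(\N) = -1$ and there is no $T \in \mathscr{C}$ with $\N = T \cup \{\F(T)-1, \F(T)\}$ (enlarging would require $\N$ to have a predecessor in $\mathscr{C}$, impossible as $\N$ is maximal). Combining the unique-parent property with the fact that iterating the parent map strictly increases the semigroup and hence reaches $\N$ after finitely many steps gives both existence and uniqueness of the path to $\N$ for every vertex, so $G(\mathscr{C})$ is a tree rooted at $\N$. The only delicate point worth stating carefully is the finiteness and termination of the parent chain, which follows because $\N \setminus S$ is finite and each application of the parent map removes at least one odd element from the complement (cf.\ Proposition~\ref{9}).
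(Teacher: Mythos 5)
Your proof is correct and takes essentially the same route as the paper, which deduces the result directly from Proposition~\ref{8}: the chain of Coe-semigroups provides the path from any $S$ to $\N$, and uniqueness is forced since each vertex $S\neq\N$ has the single parent $S\cup\{\F(S)-1,\F(S)\}$ determined by the edge definition. You have simply made explicit the unique-parent and termination arguments that the paper leaves implicit.
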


It is clear that we can be build recursively the tree  $G\big(\mathscr C\big)$, starting in $\N$ and we connecting each vertex with its sons.
Hence,  we need to characterize the sons of an arbitrary vertex of this tree.

\begin{lemma} \cite[Lemma 1.7]{colloquium}\label{12}
Let $S$ be a numerical semigroup and $x\in S$. Then $S\backslash\{x\}$ is a numerical semigroup if and only if $x\in \msg (S)$.
\end{lemma}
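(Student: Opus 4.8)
The plan is to prove both implications by translating the condition $x\in\msg(S)$ into the more tractable statement that $x$ cannot be written as a sum $a+b$ with $a,b\in S\backslash\{0\}$. Indeed, for $x\in S\backslash\{0\}$ this reformulation holds: if $x=a+b$ with $a,b$ nonzero, then $a,b<x$, so each of $a,b$ lies in $\langle\msg(S)\backslash\{x\}\rangle$ and hence $x$ is redundant, i.e. $x\notin\msg(S)$; conversely, if $x\notin\msg(S)$ and $x\neq 0$, then $x\in\langle\msg(S)\backslash\{x\}\rangle$ and, being positive, must be a sum of at least two generators distinct from $x$, which regroup into two nonzero summands. The case $x=0$ is immediate on both sides, since $0\notin\msg(S)$ and $S\backslash\{0\}$ is never a submonoid (it omits the identity).

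For the direction assuming $x\in\msg(S)$, I would verify directly that $S\backslash\{x\}$ is a numerical semigroup. Since $x\neq 0$, we have $0\in S\backslash\{x\}$, and the complement $\N\backslash(S\backslash\{x\})=(\N\backslash S)\cup\{x\}$ is finite because $\N\backslash S$ is. The only substantive point is closure under addition: take $a,b\in S\backslash\{x\}$ and suppose toward a contradiction that $a+b=x$. Neither $a$ nor $b$ can be zero, for otherwise the other would equal $x$; hence $x$ would be a sum of two nonzero elements of $S$, contradicting the characterization of minimal generators above. Therefore $a+b\in S\backslash\{x\}$, and $S\backslash\{x\}$ is a numerical semigroup.

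For the reverse direction I argue by contrapositive: if $x\notin\msg(S)$, then $S\backslash\{x\}$ fails to be a numerical semigroup. If $x=0$ this is clear. Otherwise, by the characterization $x=a+b$ with $a,b\in S\backslash\{0\}$; since $a,b<x$ we have $a,b\in S\backslash\{x\}$, yet their sum $x$ is excluded, so closure under addition fails.

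The argument is essentially routine once the equivalence ``$x\in\msg(S)$ iff $x$ is not a sum of two nonzero elements of $S$'' is in place, so the only real obstacle is pinning down that equivalence cleanly — in particular, ensuring in the converse that the regrouping yields two genuinely nonzero summands, neither of which is secretly equal to $x$. Everything else, namely finiteness of the complement and membership of $0$, is immediate.
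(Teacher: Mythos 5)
Your proof is correct. The paper itself does not prove this lemma---it is quoted from \cite[Lemma 1.7]{colloquium}---but your argument is the standard one for this fact: reduce membership in $\msg(S)$ to the statement that $x$ is not a sum of two nonzero elements of $S$, then check closure (the only nontrivial axiom) in one direction and exhibit a failure of closure, via $x=a+b$ with $a,b\in S\backslash\{x\}$, in the other; the degenerate case $x=0$ is also handled properly.
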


The next result is easy to prove.

\begin{lemma}\label{13}
Let $S$ be a numerical semigroup  such that $S\neq \N$ and $\{x,x+1\}\subseteq S$. Then  $S\backslash\{x,x+1\}$  is a numerical semigroup if and only if $\{x,x+1\}\subseteq \msg (S)$.
\end{lemma}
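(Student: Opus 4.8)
The plan is to reduce everything to a single well-known fact, equivalent to Lemma~\ref{12}: for a numerical semigroup $S$, an element $y\in S\backslash\{0\}$ lies in $\msg(S)$ if and only if $y$ cannot be written as $y=a+b$ with $a,b\in S\backslash\{0\}$. Before using it I would record two preliminary observations. Since $S\neq\N$ we have $1\notin S$, and together with $\{x,x+1\}\subseteq S$ this forces $x\geq 2$; in particular $0\notin\{x,x+1\}$, so $T:=S\backslash\{x,x+1\}$ still contains $0$ and has finite complement in $\N$. Consequently, in both directions the only issue is whether $T$ is closed under addition.

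For the direction $(\Leftarrow)$, I would assume $\{x,x+1\}\subseteq\msg(S)$ and take arbitrary $a,b\in T$, aiming to show $a+b\in T$. Since $a,b\in S$ we have $a+b\in S$, so it suffices to rule out $a+b\in\{x,x+1\}$. If $a=0$ or $b=0$ the sum is $b$ or $a$, which lies in $T$ by hypothesis; otherwise $a,b\geq 1$, and then $a+b=x$ would express $x$ as a sum of two nonzero elements of $S$, contradicting $x\in\msg(S)$, while $a+b=x+1$ would likewise contradict $x+1\in\msg(S)$. Hence $a+b\in T$, and $T$ is a numerical semigroup.

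For the direction $(\Rightarrow)$, I would argue by contraposition and show that if either of $x,x+1$ fails to be a minimal generator then $T$ is not closed. If $x\notin\msg(S)$, write $x=a+b$ with $a,b\in S\backslash\{0\}$; then $0<a,b<x$, so $a,b\in T$, and closure would force $x=a+b\in T$, which is false. If $x+1\notin\msg(S)$, write $x+1=a+b$ with $a,b\in S\backslash\{0\}$; the only way a summand could land in $\{x,x+1\}$ is $a=x$ (or $b=x$), which forces the other summand to be $1$, excluded since $1\notin S$. Thus again $a,b\in T$ and closure would give the contradiction $x+1\in T$. In either case $T$ is not a numerical semigroup, completing the contrapositive.

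The step I would watch most carefully is the $x+1$ case of $(\Rightarrow)$: unlike $x$, the element $x+1$ could a priori be split using the summand $x$ itself, and it is precisely the hypothesis $S\neq\N$ (equivalently $1\notin S$) that excludes this. This asymmetry is the only subtlety; everything else is the routine bookkeeping of checking membership in $T$.
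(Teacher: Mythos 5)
Your proof is correct. Note that the paper offers no argument for this lemma (it is stated with the remark that it ``is easy to prove''), so there is no official proof to diverge from; your argument, based on the standard characterization of minimal generators as the nonzero elements of $S$ not expressible as a sum of two nonzero elements of $S$ (the fact underlying Lemma~\ref{12}), is exactly the natural completion. In particular you correctly isolated the one genuine subtlety: in the forward direction the decomposition $x+1=x+1\cdot 1$ must be excluded, and this is precisely where the hypothesis $S\neq\N$ (i.e.\ $1\notin S$) is needed, which also explains why that hypothesis appears in the statement.
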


\begin{lemma}\label{14}
Let $S$ be an Coe-semigroup and let $x$ be  an odd element in $\msg (S)$, such that $x>\F(S)$. Then  $S\backslash\{x\}$  is a son of $S$ in the tree  $G\big(\mathscr C\big)$.
\end{lemma}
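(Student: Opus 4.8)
The plan is to set $T=S\backslash\{x\}$ and verify the two things required of a son of $S$: that $T$ is itself a vertex of $G\big(\mathscr C\big)$, i.e.\ a Coe-semigroup, and that the edge relation $S=T\cup\{\F(T)-1,\F(T)\}$ holds. First I would dispose of the easy point that $T$ is a numerical semigroup at all: since $x\in\msg(S)$, Lemma \ref{12} gives at once that $T=S\backslash\{x\}$ is a numerical semigroup.

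The heart of the argument is a parity observation showing that $T$ is a Coe-semigroup. I would take an arbitrary odd element $y\in T$; then $y\in S$, so the coating hypothesis on $S$ yields $\{y-1,y+1\}\subseteq S$. Because $y$ is odd, both $y-1$ and $y+1$ are even, whereas $x$ is odd, so neither of them equals $x$; hence $\{y-1,y+1\}\subseteq S\backslash\{x\}=T$. Thus $T$ is a Coe-semigroup. The point worth emphasising is that deleting an \emph{odd} element can never destroy the coating condition, since that condition only ever demands that the two (even) neighbours of an odd element lie in the semigroup, and an odd $x$ is never such a neighbour. This is exactly why we are allowed to remove $x$ alone, rather than being forced to remove $x\pm1$ as well.

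It then remains to identify the edge. Since $x>\F(S)$, the element $x$ exceeds every gap of $S$, so it is the largest integer missing from $T$ and $\F(T)=x$. Moreover $x$ is odd and lies in the Coe-semigroup $S$, so $x-1\in S$; as $x-1\neq x$ we get $x-1\in T$. Consequently
\[
T\cup\{\F(T)-1,\F(T)\}=T\cup\{x-1,x\}=T\cup\{x\}=S,
\]
which is precisely the relation making $T=S\backslash\{x\}$ a son of $S$ in $G\big(\mathscr C\big)$.

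I do not expect any serious obstacle: every step reduces to a direct check, and the only genuine idea is the parity remark of the second paragraph, which is what guarantees that the deletion keeps us inside $\mathscr C$ and that the single removed element fills exactly the slot $\{\F(T)-1,\F(T)\}$ prescribed by the edge relation.
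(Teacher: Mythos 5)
Your proposal is correct and follows essentially the same route as the paper's proof: apply Lemma \ref{12} to get a numerical semigroup, observe that $\F(S\backslash\{x\})=x$, use the coating property of $S$ to get $x-1\in S$, and verify the edge relation $S\backslash\{x\}\cup\{x-1,x\}=S$. The only difference is that you spell out explicitly the parity argument (the neighbours $y\pm1$ of an odd element are even, hence never equal to the removed odd $x$) that the paper compresses into the single phrase ``Since $x$ is odd then $S\backslash\{x\}$ is an Coe-semigroup.''
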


\begin{proof}
By  Lemma \ref{12}, we have that $S\backslash\{x\}$ is a numerical with $\F(S\backslash\{x\})=x$. Since $x$ is odd then $S\backslash\{x\}$ is an Coe-semigroup and $x-1\in S$. Moreover, we can deduce that $\big(S\backslash\{x\}\cup \left\{\F(S\backslash\{x\}), \F(S\backslash\{x\})-1\right\}\big)=S\backslash\{x\}\cup\{x,x-1\}=S$ and thus $S\backslash\{x\}$  is a son of $S$ in the tree  $G\big(\mathscr C\big)$.
\end{proof}

\begin{lemma}\label{15}
Let $S$ be an Coe-semigroup and let $T$ be a son of $S$ in the  tree  $G\big(\mathscr C\big)$ such that $\g(T)=\g(S)+1$. Then, there exists an odd element $x$ in $\msg (S)$  with $x>\F(S)$ such that $T=S\backslash\{x\}$.
\end{lemma}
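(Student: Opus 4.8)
The plan is to read off the structure of a son directly from the edge definition of $G\big(\mathscr C\big)$ and then convert the gender hypothesis into a statement about cardinalities. Since $T$ is a son of $S$, the edge $(T,S)$ gives $S=T\cup\{\F(T)-1,\F(T)\}$ with $T\subsetneq S$. Because $T$ is a proper subset of the numerical semigroup $S\subseteq\N$, we have $T\neq\N$, so Proposition \ref{1} applies and $\F(T)$ is odd. I would set $x:=\F(T)$; this is the candidate for the odd minimal generator, and the bulk of the argument is to show $T=S\backslash\{x\}$ together with the two remaining properties $x>\F(S)$ and $x\in\msg(S)$.

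Next I would do the gender bookkeeping. From $T\subseteq S$ we get $\N\backslash S\subseteq\N\backslash T$, hence $\g(T)=\g(S)+\lvert S\backslash T\rvert$. The hypothesis $\g(T)=\g(S)+1$ therefore forces $\lvert S\backslash T\rvert=1$. Now comes the one point that needs care: identifying which of the two adjoined elements $\F(T)-1,\F(T)$ is actually new. Since $\F(T)\notin T$ by the very definition of the Frobenius number, while $\F(T)\in S$ by construction, we always have $\F(T)\in S\backslash T$. As this set is a singleton, it must equal $\{\F(T)\}=\{x\}$; in particular $\F(T)-1\in T$ and $T=S\backslash\{x\}$. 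This is the crux, and it reduces everything else to routine checks.

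It then remains to verify the three asserted properties. That $x$ is odd was already established via Proposition \ref{1}, and $x\in S$ is immediate from $T=S\backslash\{x\}$. For $x>\F(S)$ I would argue that every integer strictly greater than $x=\F(T)$ lies in $T\subseteq S$; thus $\F(S)$, lying outside $S$, cannot exceed $x$, while $x\in S$ excludes $x=\F(S)$, leaving $x>\F(S)$. Finally, since $T=S\backslash\{x\}$ is a numerical semigroup, Lemma \ref{12} gives $x\in\msg(S)$ directly. The whole statement is essentially the converse of Lemma \ref{14}, and the only genuine obstacle is the step in the previous paragraph pinning the unique element of $S\backslash T$ down to $\F(T)$; everything else is a direct appeal to Proposition \ref{1} and Lemma \ref{12}.
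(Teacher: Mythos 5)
Your proposal is correct and follows essentially the same route as the paper's proof: identify the unique new element as $\F(T)$, conclude $T=S\backslash\{\F(T)\}$, then invoke Lemma \ref{12} for minimality, Proposition \ref{1} for oddness, and the containment $T\subseteq S$ for $\F(S)<\F(T)$. The paper states these steps more tersely (e.g.\ it asserts $T\cup\{\F(T)\}=S$ directly from the gender hypothesis), whereas you spell out the cardinality bookkeeping and the reason $\F(T)$, rather than $\F(T)-1$, is the adjoined element — but the underlying argument is identical.
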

\begin{proof}
If  $T$ is a son of $S$ such that $\g(T)=\g(S)+1$, then we obtain that  $T\cup \{\F(T)\}=S$ and so $T=S\backslash\{\F(T)\}$. By using Lemma \ref{12}, we have that $\F(T)\in \msg(S)$ and  it is clear that $\F(S)<\F(T)$. Applying Proposition \ref{1},  we conclude that $\F(T)$ is odd.
\end{proof}

\begin{lemma}\label{16}
Let $S$ be an Coe-semigroup such that $\left\{\F(S)+1,\F(S)+2\right\}\subseteq \msg(S)$. Then $S\backslash\left\{\F(S)+1,\F(S)+2\right\}$ 
is a son of $S$ in the tree  $G\big(\mathscr C\big)$.
\end{lemma}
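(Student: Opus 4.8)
The plan is to set $T=S\backslash\{\F(S)+1,\F(S)+2\}$ and verify directly that $(T,S)$ is an edge of $G\big(\mathscr C\big)$, i.e. that $T$ is an Coe-semigroup with $S=T\cup\{\F(T)-1,\F(T)\}$; by the definition of the tree this is exactly the assertion that $T$ is a son of $S$. First I would record the parity information coming from Proposition \ref{1}: since $\{\F(S)+1,\F(S)+2\}\subseteq\msg(S)$ forces $S\neq\N$, the number $\F(S)$ is odd, hence $\F(S)+1$ is even and $\F(S)+2$ is odd. Next, because $\F(S)+1$ and $\F(S)+2$ are consecutive integers lying in $\msg(S)$, Lemma \ref{13} (applied with $x=\F(S)+1$) shows that $T$ is a numerical semigroup.

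The second step is to identify $\F(T)$. Removing precisely the two integers $\F(S)+1,\F(S)+2$ that follow the largest gap $\F(S)$ of $S$ adjoins them to the set of gaps, so the largest gap of $T$ is $\F(S)+2$; that is, $\F(T)=\F(S)+2$, which is odd and consistent with Proposition \ref{1}.

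The third step, which I expect to be the only point requiring care, is to check that $T$ is an Coe-semigroup. Let $x$ be an odd element of $T$. Then $x\in S$, and since $S$ is an Coe-semigroup we already have $\{x-1,x+1\}\subseteq S$; the task is to rule out that $x-1$ or $x+1$ equals one of the removed elements $\F(S)+1$ or $\F(S)+2$. Both $x-1$ and $x+1$ are even, while $\F(S)+2$ is odd, so neither can be $\F(S)+2$. For the even value $\F(S)+1$: the equation $x+1=\F(S)+1$ gives $x=\F(S)\notin S$, impossible; and $x-1=\F(S)+1$ gives $x=\F(S)+2\notin T$, again impossible. Hence $\{x-1,x+1\}\subseteq T$, so $T$ is an Coe-semigroup.

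Finally, I would assemble the pieces: using $\F(T)=\F(S)+2$ and $\F(T)-1=\F(S)+1$,
\[
T\cup\{\F(T)-1,\F(T)\}=T\cup\{\F(S)+1,\F(S)+2\}=S,
\]
so $(T,S)$ is an edge of $G\big(\mathscr C\big)$ and therefore $T=S\backslash\{\F(S)+1,\F(S)+2\}$ is a son of $S$, as claimed. The argument runs parallel to Lemma \ref{14}, the difference being that here two generators are removed at once, which is why Lemma \ref{13} replaces Lemma \ref{12} and why the parity bookkeeping in the third step is slightly more delicate.
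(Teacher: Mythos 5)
Your proposal is correct and follows essentially the same route as the paper: Lemma \ref{13} gives that $T$ is a numerical semigroup, $\F(T)=\F(S)+2$ is identified and seen to be odd via Proposition \ref{1}, and then $S=T\cup\{\F(T)-1,\F(T)\}$ closes the argument. The only difference is that you spell out the parity check showing no odd $x\in T$ can have $x-1$ or $x+1$ among the removed elements, a verification the paper compresses into a single ``hence,'' so your write-up is if anything more complete.
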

\begin{proof}
By Lemma \ref{13}, we have that  $T=S\backslash\left\{\F(S)+1, \F(S)+2\right\}$ is a numerical semigroup such that $\F(T)=\F(S)+2$. By applying Proposition \ref{1}, $F(S)$ is odd and so $\F(T)$ is also odd. Hence, we deduce that  $T$ is an Coe-semigroup. As $S=T\cup \left\{\F(T), \F(T)-1\right\}$, then we get that $T$ is a son of $S$.
\end{proof}

\begin{lemma}\label{17}
Let $S$ be an Coe-semigroup and let $T$ be a son of $S$ in the  tree  $G\big(\mathscr C\big)$ such that $\g(T)=\g(S)+2$. Then, $\left\{\F(S)+1,\F(S)+2\right\}\subseteq \msg(S)$ and $T=S\backslash \left\{\F(S)+1, \F(S)+2\right\}$.
\end{lemma}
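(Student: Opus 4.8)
The plan is to unwind the definition of a son and then pin down the Frobenius number of $T$ relative to that of $S$. Since $T$ is a son of $S$, the edge relation of the tree $G\big(\mathscr C\big)$ gives $S=T\cup\{\F(T)-1,\F(T)\}$, and hence $T=S\setminus\{\F(T)-1,\F(T)\}$. First I would record that $T\neq\N$ (otherwise the equality would force $S=T$), so that Proposition \ref{1} applies and $\F(T)$ is an odd integer with $\F(T)\geq 1$.

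Next I would translate the gender hypothesis into a statement about gaps. We always have $\F(T)\notin T$, so the condition $\g(T)=\g(S)+2$ forces the second deleted element $\F(T)-1$ to be a genuine gap of $T$ as well, that is $\F(T)-1\notin T$. Since $0\in T$ this already gives $\F(T)-1\geq 1$, and as $\F(T)$ is odd we get $\F(T)\geq 3$, so in particular $\F(S)$ will be positive and $S\neq\N$.

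The crux is to prove $\F(S)=\F(T)-2$. Writing $\Z\setminus S=\big(\Z\setminus T\big)\setminus\{\F(T)-1,\F(T)\}$, the two largest gaps of $T$ are exactly $\F(T)$ and $\F(T)-1$ (there is nothing strictly between them nor above $\F(T)$), and both are removed, so every remaining gap is at most $\F(T)-2$, giving $\F(S)\leq\F(T)-2$. For the reverse inequality I would use that $T$ is a Coe-semigroup: the number $\F(T)-2$ is odd because $\F(T)$ is odd, so if it belonged to $T$ then the Coe property would force $\big(\F(T)-2\big)+1=\F(T)-1\in T$, contradicting the previous paragraph. Hence $\F(T)-2\notin T$, and since it is distinct from $\F(T)-1$ and $\F(T)$ it is also a gap of $S$, so $\F(S)\geq\F(T)-2$. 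Combining the two inequalities yields $\F(T)=\F(S)+2$, whence $\{\F(T)-1,\F(T)\}=\{\F(S)+1,\F(S)+2\}$ and $T=S\setminus\{\F(S)+1,\F(S)+2\}$.

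Finally, to obtain $\{\F(S)+1,\F(S)+2\}\subseteq\msg(S)$, I would apply Lemma \ref{13} with $x=\F(S)+1$: both $x$ and $x+1$ lie in $S$ since they exceed $\F(S)$, and $T=S\setminus\{x,x+1\}$ is a numerical semigroup, so the lemma gives $\{x,x+1\}=\{\F(S)+1,\F(S)+2\}\subseteq\msg(S)$. I expect the only genuinely non-formal step to be the reverse inequality $\F(S)\geq\F(T)-2$; this is precisely where the Coe hypothesis on $T$ (rather than its being merely a numerical semigroup) is indispensable, and it is the heart of the argument.
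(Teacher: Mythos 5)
Your proof is correct and follows essentially the same route as the paper's: deduce $\F(T)-1\notin T$ from the gender condition, use Proposition \ref{1} and the Coe property of $T$ to rule out $\F(T)-2\in T$, conclude $\F(S)=\F(T)-2$, and finish with Lemma \ref{13}. You simply spell out details the paper leaves implicit (that $T\neq\N$, that $S\neq\N$, and the inequality $\F(S)\leq\F(T)-2$), so there is nothing substantively different to compare.
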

\begin{proof}
If $T$ is a  son of $S$, then $T$ is an Coe-semigroup and $S=T\cup \left\{\F(T), \F(T)-1\right\}$. Since $\g(T)=\g(S)+2$, this implies that $\F(T)-1\not\in T$.  By applying Proposition \ref{1}, $F(T)$ is odd and so $\F(T)-2$ is also odd. As $\F(S)-1\not\in T$ and $T$ is an Coe-semigroup, we obtain that $\F(T)-2\not\in T$ and thus $\F(S)=\F(T)-2$. Consequently, $T=S\backslash \left\{\F(S)+1, \F(S)+2\right\}$ and by using Lemma \ref{13}, we conclude that $\left\{\F(S)+1,\F(S)+2\right\}\subseteq \msg(S)$.
\end{proof}

As a consequence of Lemmas \ref{14}, \ref{15}, \ref{16} and \ref{17} we obtain the following result.

\begin{theorem}\label{18}
Let $S$ be an Coe-semigroup,  then set of sons of $S$ in the  tree  $G\big(\mathscr C\big)$  is equal to:
\begin{enumerate}
\item  $\left\{S\backslash\{x\} ~|~ x ~ \text{is an odd element in}~ \msg (S) ~\text{with}~x>\F(S) \right\}$  if $\quad$ $\left\{\F(S)+1,\F(S)+2\right\}\nsubseteq \msg(S)$. 
\item  $\left\{S\backslash\{x\} ~|~ x ~ \text{is an odd element in}~ \msg (S) ~\text{with}~x>\F(S) \right\}\cup$ $\left\{S\backslash \left\{\F(S)+1, \F(S)+2\right\}\right\}$  ~ if  ~  $\left\{\F(S)+1,\F(S)+2\right\}\subseteq \msg(S)$. 
\end{enumerate}
\end{theorem}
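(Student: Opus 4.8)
The plan is to classify the sons $T$ of $S$ according to how much the gender jumps in passing from $T$ to $S$. By definition a son $T$ of $S$ satisfies $S=T\cup\{\F(T)-1,\F(T)\}$ with $T$ a Coe-semigroup and $T\subsetneq S$; since $\F(T)\in S\setminus T$ while $\F(T)-1$ may or may not belong to $T$, we have $S\setminus T\subseteq\{\F(T)-1,\F(T)\}$, so (as in Proposition \ref{8}) $\g(T)-\g(S)\in\{1,2\}$. I would therefore split the argument into these two cases and show that each recovers exactly one of the two families appearing in the statement.

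First I would treat the case $\g(T)=\g(S)+1$. Lemma \ref{15} shows that every such son has the form $S\setminus\{x\}$ for some odd $x\in\msg(S)$ with $x>\F(S)$, and Lemma \ref{14} supplies the converse: for each such $x$ the set $S\setminus\{x\}$ is indeed a son of $S$. Hence the sons with gender increment $1$ are precisely the members of the first family, and this family is present regardless of any further hypothesis on $S$.

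Next I would treat the case $\g(T)=\g(S)+2$. Here Lemma \ref{17} forces $\{\F(S)+1,\F(S)+2\}\subseteq\msg(S)$ together with $T=S\setminus\{\F(S)+1,\F(S)+2\}$, while Lemma \ref{16} gives the reverse implication whenever that membership condition holds. Thus a son with gender increment $2$ exists if and only if $\{\F(S)+1,\F(S)+2\}\subseteq\msg(S)$, and when it exists it is unique and equal to $S\setminus\{\F(S)+1,\F(S)+2\}$. Assembling the two cases then yields statement $(1)$ when $\{\F(S)+1,\F(S)+2\}\nsubseteq\msg(S)$ and statement $(2)$ when $\{\F(S)+1,\F(S)+2\}\subseteq\msg(S)$.

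Since the substantive content is carried by Lemmas \ref{14}--\ref{17}, I do not expect a serious obstacle; the only point demanding care is the bookkeeping of the case split, and in particular the disjointness of the two families. This disjointness is automatic because a son in the first family removes a single element from $S$ whereas the son in the second removes two, so $\lvert S\setminus T\rvert$ distinguishes them. I would also note, to prevent apparent double counting, that $\F(S)+2$ is odd (as $\F(S)$ is odd by Proposition \ref{1}) and exceeds $\F(S)$, so it may legitimately contribute the son $S\setminus\{\F(S)+2\}$ to the first family at the same time as $S\setminus\{\F(S)+1,\F(S)+2\}$ occupies the second; these are genuinely distinct vertices, confirming that the union in statement $(2)$ is not redundant.
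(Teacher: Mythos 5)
Your proof is correct and takes essentially the same route as the paper: the paper obtains Theorem \ref{18} directly as a consequence of Lemmas \ref{14}, \ref{15}, \ref{16} and \ref{17}, with exactly your case split on the gender increment $\g(T)-\g(S)\in\{1,2\}$. Your extra bookkeeping (why the increment must be $1$ or $2$, disjointness of the two families, and non-redundancy of the union in case $(2)$) is sound and merely makes explicit what the paper leaves implicit.
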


It is clear that for all $k\in \N\backslash\{0\}$ we have that $\left\{0,2k, \rightarrow\right\}$ is an Coe-semigroup, this implies that the set $\mathscr C$ has infinite cardinality.

The last theorem can be used to recurrently construct the tree  $G\big(\mathscr C\big)$, starting in $\N$, containing the set of all  Coe-semigroups.

\begin{tikzpicture}[line cap=round,line join=round,>=triangle 45,xscale=0.75,yscale=0.75]
\clip(0,-5) rectangle (16,5);

\draw [<-] (10.3,3.5) -- (10.3,2.1);

\draw [<-] (10.2,1.3) -- (7.5,0.1);
\draw [<-] (10.5,1.3) -- (14,0.2);

\draw [<-] (14.5,-1) -- (14.5,-2.5);
\draw [<-] (7,-1) -- (4,-2.5);
\draw [<-] (7.5,-1) -- (8.7,-2.5);
\draw [<-] (8,-1) -- (11,-2.5);

\draw (9.9,4.2) node[anchor=north west] {$\N$};

\draw (9.6,2.2) node[anchor=north west] {$\langle 2,3\rangle$};

\draw (6.5,0) node[anchor=north west] {$\langle 4,5,6,7\rangle$};
\draw (13.5,0) node[anchor=north west] {$\langle 2,5\rangle$};

\draw (2.7,-2.5) node[anchor=north west] {$\langle 6,7,8,9,10,11\rangle$};
\draw (7.5,-2.5) node[anchor=north west] {$\langle 4,6,7,9\rangle$};
\draw (10.5,-2.5) node[anchor=north west] {$\langle 4,5,6\rangle$};
\draw (13.5,-2.5) node[anchor=north west] {$\langle 2,7\rangle$};

\draw (3.5,-3.2) node[anchor=north west] {$\vdots~~\vdots~~\vdots~ ~ \vdots~ ~\vdots$};   
\draw (8.3,-3.2) node[anchor=north west] {$\vdots~ ~\vdots~ ~ \vdots$};  
\draw (14,-3.2) node[anchor=north west] {$\vdots$};

\draw (10.4,3) node[anchor=north west] {$1$};

\draw (7.6,1.5) node[anchor=north west] {$\{2,3\}$};
\draw (11.8,1.5) node[anchor=north west] {$3$};

\draw (3.3,-1.4) node[anchor=north west] {$\{4,5\}$};
\draw (7.3,-1.4) node[anchor=north west] {$5$};
\draw (9.8,-1.4) node[anchor=north west] {$7$};
\draw (14.6,-1.4) node[anchor=north west] {$5$};
\end{tikzpicture}

 Note that the numbers that appears on either side of the edges is the elements that we remove from the vertex to obtain its son.


\section{Examples of finite trees}\label{S4}

Given $k\in \N$ we denote by \[\mathscr C(k)=\left\{S ~|~ S ~ \text{is an Coe-semigroup and} ~k\in S\right\}.\]

If $k$ is an even positive integer and $n\in \left\{x\in \N ~|~ x> k\right\}$, then $\big(\{k\}+\langle 2\rangle\big)\cup \left\{0, n,\rightarrow\right\}\in \mathscr C(k)$ and so, in this case, $\mathscr C(k)$ has infinite cardinality.
Our first aim in this section is to see  that $\mathscr C(k)$ has finite cardinality, when $k$ is an odd  positive integer.

If $S$ is a numerical semigroup, then $\N\backslash S$ is  a finite set and so we obtain the following result.

\begin{lemma}\label{19}
If $S$ is a numerical semigroup,  then $\left\{T ~|~ T~ \text{is a numerical semigroup and } ~S\subseteq T\right\}$ is a finite set.
\end{lemma}

\begin{proposition}\label{20}
If $k$ is an odd positive integer, then $\mathscr C(k)$ is a nonempty finite set.
\end{proposition}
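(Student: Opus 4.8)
The plan is to prove the two assertions separately, using Lemma \ref{19} as the engine that delivers finiteness.

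For nonemptiness I would simply invoke the observation recorded at the start of Section \ref{S2} that $\N$ is itself an Coe-semigroup. Since trivially $k\in\N$, we have $\N\in\mathscr C(k)$, so the set is nonempty.

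The heart of the argument is finiteness, and the key observation is that the coating hypothesis forces one fixed numerical semigroup to sit inside \emph{every} member of $\mathscr C(k)$. Indeed, let $S\in\mathscr C(k)$. Because $k$ is odd and $k\in S$, the defining property of an Coe-semigroup gives $k-1\in S$ (and $k+1\in S$, which I will not even need). Combining $k-1\in S$ and $k\in S$ with the fact that $S$ is closed under addition yields $\langle k-1,k\rangle\subseteq S$. Since $\gcd(k-1,k)=1$, the set $\langle k-1,k\rangle$ is a numerical semigroup by \cite[Lemma 2.1]{libro}, and crucially it does not depend on $S$. Therefore $\mathscr C(k)$ is contained in the family of all numerical semigroups $T$ with $\langle k-1,k\rangle\subseteq T$, which is finite by Lemma \ref{19}; hence $\mathscr C(k)$ is finite.

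I do not expect a genuine obstacle here: the whole proof turns on the single free fact that $k-1\in S$, after which Lemma \ref{19} does all the work. The only point needing a moment's care is the degenerate case $k=1$, where $k-1=0$ and $\langle k-1,k\rangle=\langle 1\rangle=\N$, so that $\mathscr C(1)=\{\N\}$ is still finite and nonempty, consistent with the statement. If one preferred an explicit quantitative version instead of citing Lemma \ref{19}, one could note that $\langle k-1,k\rangle\subseteq S$ already bounds $\F(S)$ by the Sylvester value $(k-1)k-(k-1)-k$ (see \cite{Sylvester}), forcing every $S\in\mathscr C(k)$ to be determined by its intersection with $\{0,1,\dots,(k-1)k-(k-1)-k\}$, of which there are only finitely many possibilities.
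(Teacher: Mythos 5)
Your proof is correct and follows essentially the same route as the paper: exhibit $\N$ as an element for nonemptiness, use the coating property to place the fixed numerical semigroup $\langle k-1,k\rangle$ (the paper uses $\langle k-1,k,k+1\rangle$, which adds nothing essential) inside every $S\in\mathscr C(k)$, and conclude finiteness from Lemma \ref{19}.
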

\begin{proof}
Since $\N\in\mathscr C(k)$, then $\mathscr C(k)\neq \emptyset$. If $S\in \mathscr C(k)$, then we have that $\langle k-1,k,k+1\rangle\subseteq S$ and it is clear that $\langle k-1,k,k+1\rangle$ is  a numerical semigroup. Besides,  $\mathscr C(k)\subseteq \left\{T ~|~ T~ \text{is a numerical semigroup and } ~ \langle k-1,k,k+1\rangle \subseteq T\right\}$. By Lemma \ref {19}, we can deduce that $\mathscr C(k)$ is a finite set.
\end{proof}

The  following result is easy to prove.
\begin{lemma}\label{21}
 Let $k$ be a positive integer and  let $S\in\mathscr C(k)$ such that $S\neq \N$. Then $S\cup \left\{\F(S), \F(S)-1\right\}\in \mathscr C(k)$.
\end{lemma}

We define the graph $G\big(\mathscr C(k)\big)$ as the graph whose vertices are elements of $\mathscr C(k)$ and $(S, T)\in \mathscr C(k)\times \mathscr C(k)$  is an edge if $T=S\cup \left\{\F(S)-1,\F(S)\right\}$.  

Using the same argument of Section $3$, we have the following result.

\begin{theorem}\label{22}
 If $k$ is a positive integer, then the graph $G\big(\mathscr C(k)\big)$ is a tree with root equal to $\N$. Furthermore,                                         if $S\in \mathscr C(k)$ then the set of sons of $S$ in the  tree  $G\big(\mathscr C(k)\big)$  is equal to:
\begin{enumerate}
\item  $\left\{S\backslash\{x\} ~|~ x ~ \text{is an odd element in}~ \msg (S) ~\text{with}~x>\F(S)~ \text{and}~x\neq k \right\}$ if  $\left\{\F(S)+1,\F(S)+2\right\}\nsubseteq \msg\big(S\big)\backslash\{k\}$. 
\item  $\left\{S\backslash\{x\} ~|~ x ~ \text{is an odd element in}~ \msg (S) ~\text{with}~x>\F(S) ~\text{and}~x\neq k \right\}$ $\cup \left\{S\backslash \left\{\F(S)+1, \F(S)+2\right\}\right\}$ if $\left\{\F(S)+1,\F(S)+2\right\}\subseteq \msg\big(S\big)\backslash\{k\}$. 
\end{enumerate}
\end{theorem}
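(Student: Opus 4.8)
The plan is to mirror the structure used for the full tree $G(\mathscr C)$ in Section~3, adapting each ingredient to the restriction $k\in S$. First I would establish that $G(\mathscr C(k))$ is a tree with root $\N$. Since $\N\in\mathscr C(k)$ and every $S\in\mathscr C(k)$ is contained in $\N$, I would invoke Proposition~\ref{8} to produce the chain $S=S_0\subsetneq S_1\subsetneq\cdots\subsetneq S_l=\N$; by Lemma~\ref{21} each $S_i$ still contains $k$, so the entire chain lies in $\mathscr C(k)$. This exhibits, for every vertex, a path to $\N$ along the edges $T=S\cup\{\F(S)-1,\F(S)\}$, and uniqueness of the path follows because the edge from $S$ to its unique father $S\cup\{\F(S)-1,\F(S)\}$ is completely determined by $S$. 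Hence $G(\mathscr C(k))$ is a rooted tree.

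The core of the argument is the description of the sons. A son of $S$ in $G(\mathscr C(k))$ is precisely a son $T$ of $S$ in the ambient tree $G(\mathscr C)$ that additionally satisfies $k\in T$, so I would start from Theorem~\ref{18}, which already lists all sons of $S$ in $G(\mathscr C)$, and simply intersect that list with the condition $k\in T$. By Proposition~\ref{9} the gender can only increase by $1$ or $2$ along an edge, giving the two families described in Theorem~\ref{18}: the removals $S\setminus\{x\}$ for odd $x\in\msg(S)$ with $x>\F(S)$, and, when $\{\F(S)+1,\F(S)+2\}\subseteq\msg(S)$, the extra son $S\setminus\{\F(S)+1,\F(S)+2\}$.

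Next I would impose $k\in T$ on each candidate. For a son of the first type, $T=S\setminus\{x\}$, the condition $k\in T$ fails exactly when $x=k$; this explains the extra clause $x\neq k$ in both cases of the statement. For the son of the second type, $T=S\setminus\{\F(S)+1,\F(S)+2\}$, one has $k\in T$ iff $k\notin\{\F(S)+1,\F(S)+2\}$, and I would reformulate this by passing from $\msg(S)$ to $\msg(S)\setminus\{k\}$: the extra son survives precisely when $\{\F(S)+1,\F(S)+2\}\subseteq\msg(S)\setminus\{k\}$, which is the dichotomy separating case~(1) from case~(2). Note that $\F(S)+1$ and $\F(S)+2$ are automatically $>\F(S)$, so removing them cannot delete any element smaller than $k$ other than possibly $k$ itself, and the reformulation via $\msg(S)\setminus\{k\}$ captures exactly the two sub-cases.

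The step I expect to be the main obstacle is verifying that this reformulation is faithful, i.e. that $\{\F(S)+1,\F(S)+2\}\subseteq\msg(S)\setminus\{k\}$ correctly encodes both the membership $\{\F(S)+1,\F(S)+2\}\subseteq\msg(S)$ needed for the extra son to exist \emph{and} the constraint $k\notin\{\F(S)+1,\F(S)+2\}$ needed for $k$ to remain in $T$. The delicate point is the boundary case where $k$ equals $\F(S)+1$ or $\F(S)+2$: here the second-type son exists in $G(\mathscr C)$ but is excluded from $G(\mathscr C(k))$, and one must check that the condition $\{\F(S)+1,\F(S)+2\}\subseteq\msg(S)\setminus\{k\}$ indeed fails in that situation, placing $S$ correctly in case~(1). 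Once this bookkeeping is confirmed, the theorem follows directly from Theorem~\ref{18} by restriction.
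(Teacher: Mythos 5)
Your proposal is correct and matches the paper's intended argument: the paper proves this theorem simply by invoking ``the same argument of Section 3,'' which is exactly what you carry out — Lemma \ref{21} guarantees the father map $S\mapsto S\cup\{\F(S)-1,\F(S)\}$ stays inside $\mathscr C(k)$ (so the chains of Proposition \ref{8} give unique paths to $\N$), and the sons in $G\big(\mathscr C(k)\big)$ are precisely the sons from Theorem \ref{18} that still contain $k$. Your explicit verification of the boundary case $k\in\{\F(S)+1,\F(S)+2\}$, where the second-type son exists in $G\big(\mathscr C\big)$ but is correctly pushed into case (1) by the condition $\{\F(S)+1,\F(S)+2\}\subseteq \msg(S)\backslash\{k\}$, is exactly the bookkeeping the paper leaves implicit.
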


\begin{example}\label{23}
We are going to build the the tree   $G\big(\mathscr C(5)\big)$.

\begin{tikzpicture}[line cap=round,line join=round,>=triangle 45,xscale=0.75,yscale=0.75]
\clip(0,-5) rectangle (16,5);

\draw [<-] (10.3,3.5) -- (10.3,2.1);

\draw [<-] (10.2,1.3) -- (7.5,0.1);
\draw [<-] (10.5,1.3) -- (14,0.2);

\draw [<-] (7.5,-1) -- (7.5,-2.5);

\draw (9.9,4.2) node[anchor=north west] {$\N$};

\draw (9.6,2.2) node[anchor=north west] {$\langle 2,3\rangle$};

\draw (6.5,0) node[anchor=north west] {$\langle 4,5,6,7\rangle$};
\draw (13.5,0) node[anchor=north west] {$\langle 2,5\rangle$};

\draw (6.5,-2.5) node[anchor=north west] {$\langle 4,5,6\rangle$};

\draw (10.4,3) node[anchor=north west] {$1$};

\draw (7.6,1.5) node[anchor=north west] {$\{2,3\}$};
\draw (11.8,1.5) node[anchor=north west] {$3$};

\draw (7.7,-1.4) node[anchor=north west] {$7$};
\end{tikzpicture}
\end{example}

Given a positive integer $F$, denote by 
\[\mathscr C\big(Frob\leq F\big)=\left\{S ~|~ S ~ \text{is an Coe-semigroup with} ~ \F(S)\leq F\right\}.\]

Note that if $S$ is an element in $\mathscr C\big(Frob\leq F\big)$, then $\left\{F+1,\rightarrow\right\}\subseteq S$ and thus  $\mathscr C\big(Frob\leq F\big)$ is a finite set.

The  next result is easy to prove.
\begin{lemma}\label{24}
 Let $F$ be a positive integer and  let $S\in \mathscr C\big(Frob\leq F\big)$ such that $S\neq \N$. Then $S\cup \left\{\F(S), \F(S)-1\right\}\in \mathscr C\big(Frob\leq F\big)$.
\end{lemma}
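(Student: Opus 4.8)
The plan is to reduce everything to Lemma \ref{7}, which already does the heavy lifting. Setting $T = S\cup\{\F(S),\F(S)-1\}$, I would first invoke Lemma \ref{7} to conclude that $T$ is an Coe-semigroup, using the hypotheses that $S$ is an Coe-semigroup and $S\neq \N$. This takes care of the membership condition for the class $\mathscr C$; what remains is only to verify the Frobenius bound $\F(T)\leq F$.

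For that bound I would compute $\F(T)$ directly from the definition $\F(T)=\max(\Z\setminus T)$. By definition $\F(S)=\max(\Z\setminus S)$, so every integer strictly greater than $\F(S)$ already lies in $S\subseteq T$; moreover $\F(S)\in T$ and $\F(S)-1\in T$ by construction. Hence no integer that is greater than or equal to $\F(S)-1$ is missing from $T$, which forces $\F(T)\leq \F(S)-2<\F(S)$. Combining this with the hypothesis $\F(S)\leq F$ yields $\F(T)<F$, and in particular $\F(T)\leq F$, so $T\in \mathscr C\big(Frob\leq F\big)$, as required.

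There is essentially no obstacle here: the only point requiring a moment of care is checking that adjoining the single gap $\F(S)$ (together with $\F(S)-1$) strictly decreases the Frobenius number rather than leaving it unchanged, which holds precisely because $\F(S)$ is by definition the largest gap of $S$. One could also observe, via Proposition \ref{1}, that $\F(S)$ is odd, but this fact is not needed for the Frobenius estimate itself; it is only relevant to the Coe-semigroup structure of $T$, which is already guaranteed by Lemma \ref{7}.
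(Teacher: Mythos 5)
Your proof is correct and is exactly the argument the paper has in mind (it omits the proof as ``easy to prove''): Lemma \ref{7} gives that $S\cup\{\F(S)-1,\F(S)\}$ is an Coe-semigroup, and since its gaps are those of $S$ with $\F(S)-1$ and $\F(S)$ removed, its Frobenius number is at most $\F(S)-2\leq F$. Nothing is missing.
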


Now, we define the graph $G\big(\mathscr C (Frob\leq F)\big)$ as follows:  $\mathscr C \big(Frob\leq F\big)$ is its set of vertices and $(S, T)\in \mathscr C \big(Frob\leq F\big)\times \mathscr C \big(Frob\leq F\big)$  is an edge if $T=S\cup \left\{\F(S)-1,\F(S)\right\}$.  

By using  again the same argument of Section $3$, we have the next result.

\begin{theorem}\label{25}
 If $F$ is a positive integer, then the graph $G\big(\mathscr C (Frob\leq F)\big)$ is a tree with root equal to $\N$. Furthermore,                                         if $S\in \mathscr C \big(Frob\leq F\big)$ then the set of sons of $S$ in the  tree  $G\big(\mathscr C (Frob\leq F)\big)$ is equal to:
\begin{enumerate}
\item   $\left\{S\backslash\{x\} ~|~ x ~ \text{is an odd element in}~ \msg (S) ~\text{with}~\F(S)<x\leq F \right\}$  if  $\left\{\F(S)+1,\F(S)+2\right\}\nsubseteq \left\{x\in \msg(S)~|~ x\leq F\right\}$. 
\item     $\left\{S\backslash\{x\} ~|~ x ~ \text{is an odd element in}~ \msg (S) ~\text{with}~\F(S)<x\leq F\right\}$ $ \cup \left\{S\backslash \left\{\F(S)+1, \F(S)+2\right\}\right\}$ if   $\left\{\F(S)+1,\F(S)+2\right\}\subseteq \left\{x\in \msg(S) ~|~ x\leq F\right\}$. 
\end{enumerate}
\end{theorem}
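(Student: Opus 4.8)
The plan is to follow verbatim the pattern of Section 3 for the tree $G(\mathscr C)$, inserting the single extra constraint $\F(\cdot)\leq F$ at each step. The statement splits naturally into two tasks: showing that $G\big(\mathscr C(Frob\leq F)\big)$ is a tree rooted at $\N$, and then characterizing the sons of an arbitrary vertex by restricting the characterization already obtained in Theorem \ref{18}.

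First I would settle the tree structure. Since $\F(\N)=-1\leq F$, we have $\N\in\mathscr C\big(Frob\leq F\big)$, so $\N$ is a vertex. Given any $S\in\mathscr C\big(Frob\leq F\big)$ with $S\neq\N$, Lemma \ref{24} guarantees that its parent $S\cup\{\F(S)-1,\F(S)\}$ again lies in $\mathscr C\big(Frob\leq F\big)$. Iterating the parent operation produces exactly the chain $S=S_0\subsetneq S_1\subsetneq\cdots\subsetneq S_k=\N$ of Proposition \ref{8}; because each $S_{i+1}$ fills in $\F(S_i)$, the Frobenius numbers strictly decrease along the chain, so from $\F(S_0)=\F(S)\leq F$ every term satisfies $\F(S_i)\leq F$ and hence stays in the family. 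This gives a unique path from $S$ to $\N$, which is precisely what is needed for the graph to be a tree with root $\N$.

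Next I would characterize the sons. The edge relation of $G\big(\mathscr C(Frob\leq F)\big)$ is the restriction to the subfamily of the edge relation of $G(\mathscr C)$, since the defining formula $T=S\cup\{\F(S)-1,\F(S)\}$ is identical in both graphs. Hence, for $S\in\mathscr C\big(Frob\leq F\big)$, a semigroup $T$ is a son of $S$ in $G\big(\mathscr C(Frob\leq F)\big)$ if and only if $T$ is a son of $S$ in $G(\mathscr C)$ and $T\in\mathscr C\big(Frob\leq F\big)$. By Theorem \ref{18} the sons in $G(\mathscr C)$ are of two types, and from the proofs of Lemmas \ref{14} and \ref{16} I would record their Frobenius numbers: $\F(S\setminus\{x\})=x$ for an odd minimal generator $x>\F(S)$, and $\F\big(S\setminus\{\F(S)+1,\F(S)+2\}\big)=\F(S)+2$ for the paired son. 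Imposing $\F(T)\leq F$ on each candidate then turns $x>\F(S)$ into $\F(S)<x\leq F$ in the first family, and forces $\F(S)+2\leq F$ for the paired son to survive.

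Finally I would reconcile these surviving conditions with the displayed statement. Adjoining the bound $\F(S)+2\leq F$ to the requirement $\{\F(S)+1,\F(S)+2\}\subseteq\msg(S)$ is exactly the hypothesis $\{\F(S)+1,\F(S)+2\}\subseteq\{x\in\msg(S)\mid x\leq F\}$, because $\F(S)+1<\F(S)+2$ so that $\F(S)+2\leq F$ already secures $\F(S)+1\leq F$. Thus the paired son belongs to $\mathscr C\big(Frob\leq F\big)$ precisely in case (2) and is excluded in case (1), yielding the two descriptions. I do not expect a genuine obstacle: the entire argument is a transcription of Section 3, and the only point demanding care is the bookkeeping that identifies the Frobenius numbers of the two kinds of son as $x$ and $\F(S)+2$, so that the single inequality $\F(T)\leq F$ translates correctly into the bounds $x\leq F$ and $\F(S)+2\leq F$.
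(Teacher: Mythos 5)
Your proposal is correct and is essentially the paper's own argument: the paper proves Theorem \ref{25} precisely by invoking ``the same argument of Section 3,'' i.e., Lemma \ref{24} for the parent operation plus the son characterization of Theorem \ref{18} filtered by the condition $\F(T)\leq F$, which is exactly what you carry out. Your bookkeeping of the Frobenius numbers of the two types of sons ($x$ and $\F(S)+2$) and the resulting reformulation of the membership conditions matches the statement, so there is nothing to object to.
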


\begin{example}\label{26}
We are going to build the tree   $G\big(\mathscr C (Frob\leq 5)\big)$.

\begin{tikzpicture}[line cap=round,line join=round,>=triangle 45,xscale=0.75,yscale=0.75]
\clip(0,-5) rectangle (16,5);

\draw [<-] (10.3,3.5) -- (10.3,2.1);

\draw [<-] (10.2,1.3) -- (7.5,0.1);
\draw [<-] (10.5,1.3) -- (14,0.2);

\draw [<-] (14.3,-1) -- (14.3,-2.5);
\draw [<-] (7,-1) -- (4,-2.5);
\draw [<-] (7.5,-1) -- (8.7,-2.5);

\draw (9.9,4.2) node[anchor=north west] {$\N$};

\draw (9.6,2.2) node[anchor=north west] {$\langle 2,3\rangle$};

\draw (6.5,0) node[anchor=north west] {$\langle 4,5,6,7\rangle$};
\draw (13.5,0) node[anchor=north west] {$\langle 2,5\rangle$};

\draw (2.7,-2.5) node[anchor=north west] {$\langle 6,7,8,9,10,11\rangle$};
\draw (7.5,-2.5) node[anchor=north west] {$\langle 4,6,7,9\rangle$};
\draw (13.5,-2.5) node[anchor=north west] {$\langle 2,7\rangle$};

\draw (10.4,3) node[anchor=north west] {$1$};

\draw (7.6,1.5) node[anchor=north west] {$\{2,3\}$};
\draw (11.8,1.5) node[anchor=north west] {$3$};

\draw (3.2,-1.4) node[anchor=north west] {$\{4,5\}$};
\draw (8.3,-1.4) node[anchor=north west] {$5$};
\draw (14.6,-1.4) node[anchor=north west] {$5$};
\end{tikzpicture}
\end{example}

Given a positive integer $g$, denote by 
\[\mathscr C\big(gen\leq g\big)=\left\{S ~|~ S ~ \text{is an Coe-semigroup with} ~ \g(S)\leq g\right\}.\]
In \cite[Lemma $2.14$]{libro} it is shown that, if $S$ is a numerical semigroup then $\F(S)\leq 2\g(S)-1$. Therefore, we obtain that $\mathscr C\big(gen\leq g\big)\subseteq \mathscr C\big(Frob\leq 2g-1\big)$ and so $\mathscr C\big(gen\leq g\big)$ is a finite set.

The  next result is easy to prove.
\begin{lemma}\label{27}
 Let $g$ be a positive integer and  let $S\in \mathscr C\big(gen\leq g\big)$ such that $S\neq \N$. Then $S\cup \left\{\F(S), \F(S)-1\right\}\in \mathscr C\big(gen\leq g\big)$.
\end{lemma}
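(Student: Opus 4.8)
The plan is to reduce everything to two facts that are already available in the excerpt: Lemma~\ref{7}, which guarantees that enlarging a Coe-semigroup by the pair $\{\F(S)-1,\F(S)\}$ keeps us inside the class $\mathscr C$, and the elementary monotonicity of the gender under inclusion. No new ideas beyond these two are needed, which is why the statement is flagged as easy.

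First I would set $T=S\cup\{\F(S),\F(S)-1\}$. Since $S$ is a Coe-semigroup with $S\neq\N$, Lemma~\ref{7} applies verbatim and yields that $T$ is again a Coe-semigroup. Thus the membership condition defining $\mathscr C$ is already settled, and it only remains to verify the gender bound $\g(T)\leq g$.

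For the gender I would simply observe that $S\subseteq T$, so $\N\setminus T\subseteq \N\setminus S$, giving $\g(T)=|\N\setminus T|\leq |\N\setminus S|=\g(S)\leq g$. (One can even note that the inequality is strict: $\F(S)=\max(\Z\setminus S)$ is a gap of $S$ that is removed in passing to $T$, so in fact $\g(T)\leq \g(S)-1$; but the non-strict estimate already suffices.) Combining the two observations, $T$ is a Coe-semigroup with $\g(T)\leq g$, that is, $T\in\mathscr C\big(gen\leq g\big)$, as desired.

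There is essentially no obstacle in this argument: the substantive content is entirely packaged in Lemma~\ref{7}, while the gender inequality is nothing more than the remark that adjoining elements to a numerical semigroup cannot increase the number of its gaps. The only point worth stating explicitly, to keep the proof self-contained, is the hypothesis $S\neq\N$, which is exactly what licenses the appeal to Lemma~\ref{7}.
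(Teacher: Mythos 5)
Your proof is correct and is exactly the argument the paper intends (the paper omits it as ``easy to prove''): apply Lemma~\ref{7} to get that $T=S\cup\{\F(S)-1,\F(S)\}$ is a Coe-semigroup, and note that $S\subseteq T$ forces $\g(T)\leq\g(S)\leq g$. Your additional remark that the inequality is strict, since the gap $\F(S)$ of $S$ is filled in $T$, is also accurate but not needed.
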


We define the graph $G\big(\mathscr C (gen\leq g)\big)$ as follows:  $\mathscr C \big(gen\leq g\big)$ is its set of vertices and $(S, T)\in \mathscr C \big(gen\leq g\big)\times \mathscr C \big(gen\leq g\big)$  is an edge if $T=S\cup \left\{\F(S)-1,\F(S)\right\}$.  

By using  again the same argument of Section $3$, we have the next result.

\begin{theorem}\label{28}
 If $g$ is a positive integer, then the graph $G\big(\mathscr C (gen\leq g)\big)$ is a tree with root equal to $\N$. Furthermore,                                         if $S\in \mathscr C \big(gen\leq g\big)$ then the set of sons of $S$ in the  tree  $G\big(\mathscr C (gen\leq g)\big)$ is equal to:
\begin{enumerate}
\item the set of sons of $S$ in the tree  $G\big(\mathscr C\big)$ if $\g(S)\leq g-2$ (see Theorem \ref{18}).
\item   $\left\{S\backslash\{x\} ~|~ x ~ \text{is an odd element in}~ \msg (S) ~\text{with}~x>\F(S) \right\}$ if  $\g(S)=g-1$.
\item  the empty set if $\g(S)=g$. 
\end{enumerate}
\end{theorem}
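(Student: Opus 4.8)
The plan is to reuse the template that produced the trees $G(\mathscr C)$, $G(\mathscr C(k))$ and $G(\mathscr C(Frob\leq F))$: the family $\mathscr C(gen\leq g)$ is again closed under the operation $S\mapsto S\cup\{\F(S)-1,\F(S)\}$, and the genus furnishes a strictly monotone quantity along every edge, which is all that is needed.

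First I would prove that $G(\mathscr C(gen\leq g))$ is a tree with root $\N$. Since $\g(\N)=0\leq g$, we have $\N\in\mathscr C(gen\leq g)$. Given $S\in\mathscr C(gen\leq g)$ with $S\neq\N$, Proposition \ref{8} yields a chain $S=S_0\subsetneq S_1\subsetneq\cdots\subsetneq S_k=\N$ with $S_{i+1}=S_i\cup\{\F(S_i)-1,\F(S_i)\}$; applying Lemma \ref{27} inductively shows that every $S_i$ remains in $\mathscr C(gen\leq g)$, so this chain is a path joining $S$ to $\N$ in $G(\mathscr C(gen\leq g))$. Each vertex $S\neq\N$ is the first coordinate of exactly one edge, namely $(S,\,S\cup\{\F(S)-1,\F(S)\})$, and $\g$ strictly decreases along that edge; hence every path from $S$ toward $\N$ is forced, the path up to $\N$ is unique, and the graph is a tree rooted at $\N$.

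For the description of the sons, the key remark is that the defining edge relation---$T$ is a son of $S$ exactly when $S=T\cup\{\F(T)-1,\F(T)\}$---is identical to the one defining $G(\mathscr C)$. Because $\mathscr C(gen\leq g)\subseteq\mathscr C$, a numerical semigroup $T$ is a son of $S$ in $G(\mathscr C(gen\leq g))$ if and only if $T$ is a son of $S$ in $G(\mathscr C)$ and moreover $\g(T)\leq g$. Thus I would simply intersect the son set supplied by Theorem \ref{18} with the constraint $\g(T)\leq g$.

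Finally I would carry out the genus bookkeeping. By Theorem \ref{18}, a son $T=S\setminus\{x\}$ obtained by deleting a single odd minimal generator $x>\F(S)$ satisfies $\g(T)=\g(S)+1$, whereas the son $T=S\setminus\{\F(S)+1,\F(S)+2\}$ (available when $\{\F(S)+1,\F(S)+2\}\subseteq\msg(S)$) satisfies $\g(T)=\g(S)+2$. Imposing $\g(T)\leq g$ then yields the three stated cases: if $\g(S)\leq g-2$ both types survive and we recover the full set of Theorem \ref{18} (item 1); if $\g(S)=g-1$ only the single-generator deletions survive, the paired deletion being excluded because its genus would be $g+1$ (item 2); and if $\g(S)=g$ every son would have genus exceeding $g$, so the set is empty (item 3). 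The whole argument is bookkeeping, and the only point requiring care is the strict monotonicity $\g(T)-\g(S)\in\{1,2\}$, which simultaneously rules out cycles in the tree part and drives the three-way split here.
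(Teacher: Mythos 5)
Your proof is correct and follows essentially the same route the paper intends: the paper gives no explicit proof of Theorem \ref{28}, deferring to ``the same argument of Section 3,'' which is exactly what you carry out---Proposition \ref{8} plus Lemma \ref{27} for the rooted-tree structure, and then the observation that the sons inside $\mathscr C(gen\leq g)$ are the sons from Theorem \ref{18} surviving the constraint $\g(T)\leq g$, with the increments $\g(T)-\g(S)\in\{1,2\}$ from Lemmas \ref{15} and \ref{17} producing the three cases. Nothing is missing.
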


\begin{example}\label{29}
We are going to build the the tree   $G\big(\mathscr C (gen\leq 4)\big)$.

\begin{tikzpicture}[line cap=round,line join=round,>=triangle 45,xscale=0.75,yscale=0.75]
\clip(0,-5.5) rectangle (16,5);

\draw [<-] (10.3,3.5) -- (10.3,2.1);

\draw [<-] (10.2,1.3) -- (7.5,0.1);
\draw [<-] (10.5,1.3) -- (14,0.2);

\draw [<-] (7,-1) -- (4,-2.5);
\draw [<-] (7.5,-1) -- (8.7,-2.5);
\draw [<-] (14.5,-1) -- (14.5,-2.5);
\draw [<-] (14.5,-3.3) -- (14.5,-4.5);

\draw (9.9,4.2) node[anchor=north west] {$\N$};

\draw (9.6,2.2) node[anchor=north west] {$\langle 2,3\rangle$~(1)};

\draw (6.5,0) node[anchor=north west] {$\langle 4,5,6,7\rangle$~(3)};
\draw (13.5,0) node[anchor=north west] {$\langle 2,5\rangle$~(2)};

\draw (2.7,-2.5) node[anchor=north west] {$\langle 4, 6,7,9\rangle$~(4)};
\draw (7.5,-2.5) node[anchor=north west] {$\langle 4,5,6\rangle$~(4)};
\draw (13.5,-2.5) node[anchor=north west] {$\langle 2,7\rangle$~(3)};
\draw (13.5,-4.5) node[anchor=north west] {$\langle 2,9\rangle$~(4)};

\draw (10.4,3) node[anchor=north west] {$1$};

\draw (7.6,1.5) node[anchor=north west] {$\{2,3\}$};
\draw (11.8,1.5) node[anchor=north west] {$3$};

\draw (4.5,-1.4) node[anchor=north west] {$5$};
\draw (8.3,-1.4) node[anchor=north west] {$7$};
\draw (14.6,-1.4) node[anchor=north west] {$5$};
\draw (14.6,-3.8) node[anchor=north west] {$7$};
\end{tikzpicture}
\end{example}

 Observe that the numbers that appears on the right side of each of the semigroups are their genders.

\section{Coe-monoids}\label{S5}

It is well known that the intersection of finitely many numerical semigroups is a numerical semigroup. Hence, the next result is easy to prove.

\begin{proposition}\label{30}
The intersection of finitely  many Coe-semigroups is an Coe-semigroup.
\end{proposition}

Note that the previous result does not hold for infinite intersections. In fact, by using Proposition \ref{4}, we can deduce that $\langle 2,2k+1\rangle$ is an Coe-semigroup for all $k\in \N$ and $\bigcap_{k\in\N} \langle 2,2k+1\rangle=\langle 2\rangle$ is not a numerical semigroup.

Clearly, the intersection (finite or infinite) of numerical semigroups is a submonoid of $(\N,+)$. An Coe-monoid is a submonoid of $(\N,+)$ which can be expressed as intersection of Coe-semigroups. Besides, the intersection of Coe-monoids is an Coe-monoid.  In view of this, given $X$ a subset of $\N$ we can define the Coe-monoid generated by $X$ as the intersection of all Coe-monoids containing $X$, denoted by $Coe(X)$. Then, we have that $Coe(X)$ is the smallest Coe-monoid containing $X$.

The next result has immediate proof.

\begin{proposition}\label{31}
If $X$ is a subset of $\N$, then  $Coe(X$) is the intersection of all  Coe-semigroups that containing $X$.
\end{proposition}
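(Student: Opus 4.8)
The plan is to prove the two inclusions between $Coe(X)$ and the set
$D := \bigcap\{S \mid S \text{ is a Coe-semigroup and } X \subseteq S\}$ directly from the definitions. First I would check that the family over which $D$ is taken is nonempty: since $\N$ is a Coe-semigroup (as noted at the beginning of Section~\ref{S2}) and $X \subseteq \N$, at least $\N$ belongs to it. Hence $D$ is well defined; being an intersection of Coe-semigroups it is by definition a Coe-monoid, and $X \subseteq D$ because $X$ is contained in every member of the family.

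For the inclusion $Coe(X) \subseteq D$, I would simply invoke that $Coe(X)$ is, by construction, the smallest Coe-monoid containing $X$. Since $D$ is a Coe-monoid containing $X$, minimality immediately gives $Coe(X) \subseteq D$.

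For the reverse inclusion $D \subseteq Coe(X)$, I would use the definition of a Coe-monoid. Because $Coe(X)$ is a Coe-monoid, it can be written as an intersection $Coe(X) = \bigcap_{i \in I} S_i$ of Coe-semigroups. For each $i \in I$ we have $X \subseteq Coe(X) \subseteq S_i$, so every $S_i$ is a Coe-semigroup containing $X$ and is therefore one of the semigroups occurring in the intersection that defines $D$. Consequently $D \subseteq S_i$ for all $i \in I$, whence $D \subseteq \bigcap_{i \in I} S_i = Coe(X)$. Combining the two inclusions yields $Coe(X) = D$, as claimed.

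The argument is entirely formal, so I do not expect any genuine obstacle. The only points deserving a little care are verifying that the indexing family defining $D$ is nonempty (handled by $\N$) and using the definition of a Coe-monoid faithfully as an intersection of Coe-semigroups, rather than some weaker property, when expressing $Coe(X)$ in the second inclusion.
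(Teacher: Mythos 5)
Your proof is correct and is exactly the formal argument the paper has in mind: the paper states this result without proof (``The next result has immediate proof''), and your two inclusions --- minimality of $Coe(X)$ for one direction, and writing $Coe(X)$ as an intersection $\bigcap_{i\in I}S_i$ of Coe-semigroups each containing $X$ for the other --- fill in those immediate details faithfully. Your care about the nonemptiness of the defining family (via $\N$) is a sensible extra check, not a deviation.
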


If $M$ is an Coe-monoid and $X$ is a subset of $\N$ such that $M=Coe(X)$, then we say that $X$ is an coe-system of generators of $M$.

The following properties are a direct consequence of the definitions.

\begin{lemma}\label{32}
 Let $X$ and $Y$ be subsets of $\N$ and let $M$ be an Coe-monoid. Then the following conditions hold:
\begin{enumerate}
\item If $X\subseteq Y$ then $Coe(X)\subseteq Coe(Y)$,   
\item   $Coe(X)=Coe(\langle X\rangle)$,
\item   $Coe(M)=M$,
\item   $Coe(X\backslash\{0\})=Coe(X)$,
\item   $Coe(\emptyset)=\{0\}$.
\end{enumerate}
\end{lemma}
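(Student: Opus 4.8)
The plan is to verify each of the five assertions directly from the definition of $Coe(X)$ as the smallest Coe-monoid containing $X$, together with the characterization in Proposition \ref{31} that $Coe(X)$ is the intersection of all Coe-semigroups containing $X$. Since all five are described as ``a direct consequence of the definitions,'' the work is a sequence of short monotonicity and minimality arguments rather than any single hard construction; I would organize the proof so that each item reduces to one of two principles: (a) $Coe(X)$ is an Coe-monoid containing $X$, and (b) if $N$ is any Coe-monoid with $X\subseteq N$, then $Coe(X)\subseteq N$.

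For item $1)$, I would argue that if $X\subseteq Y$, then any Coe-monoid containing $Y$ also contains $X$; hence the family of Coe-monoids containing $Y$ is a subfamily of those containing $X$, and by Proposition \ref{31} the intersection over the smaller family (defining $Coe(Y)$) contains the intersection over the larger family (defining $Coe(X)$), giving $Coe(X)\subseteq Coe(Y)$. For item $3)$, since $M$ is itself an Coe-monoid containing $M$, minimality gives $Coe(M)\subseteq M$, while $M\subseteq Coe(M)$ holds because $Coe(M)$ must contain $M$; hence equality. Item $5)$ follows because $\{0\}$ is a submonoid and is an Coe-monoid (it has no odd element, so the coating condition is vacuous, and it can be realized as an intersection of Coe-semigroups, or one checks it is the intersection of all Coe-semigroups, whose common element set certainly contains $0$); since $\emptyset\subseteq\{0\}$ and $\{0\}$ is the smallest submonoid, minimality forces $Coe(\emptyset)=\{0\}$.

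For items $2)$ and $4)$ I would use the fact that every Coe-monoid is in particular a submonoid of $(\N,+)$. For item $2)$, note $X\subseteq\langle X\rangle$ gives $Coe(X)\subseteq Coe(\langle X\rangle)$ by item $1)$; conversely, any Coe-monoid $N$ containing $X$ is a submonoid, hence contains $\langle X\rangle$, so $N$ is among the Coe-monoids containing $\langle X\rangle$, whence $Coe(\langle X\rangle)\subseteq N$, and taking $N=Coe(X)$ (which does contain $X$) yields $Coe(\langle X\rangle)\subseteq Coe(X)$. Item $4)$ is analogous: $X\setminus\{0\}\subseteq X$ gives one inclusion by item $1)$, and since every submonoid contains $0$, any Coe-monoid containing $X\setminus\{0\}$ automatically contains $X=(X\setminus\{0\})\cup\{0\}$, giving the reverse inclusion by the same minimality argument.

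I do not anticipate a genuine obstacle here, as each step is a two-line inclusion argument. The one point requiring mild care is that the definition of $Coe(X)$ is phrased via intersections of Coe-\emph{monoids}, while Proposition \ref{31} re-expresses it via Coe-\emph{semigroups}; I would state explicitly at the outset which characterization I invoke in each item, and I would make sure in item $5)$ that $\{0\}$ genuinely qualifies as an Coe-monoid (the coating condition $\{x-1,x+1\}\subseteq S$ ranges over odd elements, of which $\{0\}$ has none, so it holds vacuously) before concluding by minimality.
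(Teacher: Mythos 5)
Your proof is correct and takes exactly the approach the paper intends: the paper omits the proof entirely, stating only that these properties are ``a direct consequence of the definitions,'' and your minimality/monotonicity arguments are precisely those intended details. You also handle the one genuinely delicate point correctly, namely that in item $5)$ the set $\{0\}$ must be certified as an Coe-monoid via the paper's actual definition (an intersection of Coe-semigroups, e.g.\ the intersection of all of them, or of the semigroups $\{0\}\cup\{2k,\rightarrow\}$), rather than by the coating condition alone, since the characterization in Theorem \ref{35} is stated only for $M\neq\{0\}$ and appears later in the paper.
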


\begin{proposition}\label{33}
Let $M$ be an Coe-monoid such that $M\neq\{0\}$. Then there exists a  nonempty finite subset of $\N\backslash\{0\}$ such that $M=Coe(X)$.
\end{proposition}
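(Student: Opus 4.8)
The plan is to reduce the statement to the already-quoted fact that every submonoid of $(\N,+)$ is finitely generated in the ordinary sense, and then to let Lemma \ref{32} convert ordinary generation into Coe-generation. Concretely, since $M$ is in particular a submonoid of $(\N,+)$, the result cited in the introduction (\cite[Corollary 2.8]{libro}) guarantees that $M$ admits a unique minimal system of generators $\msg(M)$, which is finite. My candidate set is simply $X=\msg(M)$, so that $\langle X\rangle = M$. No clever construction of generators is needed: the whole point is that the ordinary monoid generators of $M$ already do the job once we know $M$ is an Coe-monoid.

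First I would check that $X=\msg(M)$ meets the two side conditions in the statement. Because $0$ is the identity of $(\N,+)$, it is never a minimal generator, so $\msg(M)\subseteq \N\backslash\{0\}$. Because $M\neq\{0\}$, the monoid $M$ contains a positive integer, hence $\msg(M)\neq\emptyset$. Thus $X$ is a nonempty finite subset of $\N\backslash\{0\}$, exactly as required.

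The core of the argument is then a two-line chain using Lemma \ref{32}. Since $\langle X\rangle=M$, property $(2)$ of Lemma \ref{32} gives $Coe(X)=Coe(\langle X\rangle)=Coe(M)$, and since $M$ is an Coe-monoid, property $(3)$ gives $Coe(M)=M$. Combining these, $Coe(X)=M$, which is precisely the desired conclusion.

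I do not expect a genuine obstacle here: the proposition is a direct corollary of finite generation of submonoids of $(\N,+)$ together with Lemma \ref{32}. The only point requiring a moment's care is conceptual rather than technical, namely recognizing that one should not try to exhibit a special ``Coe-system'' of generators from scratch, but instead feed the ordinary minimal system $\msg(M)$ through the operator $Coe(\cdot)$; Lemma \ref{32}$(2)$--$(3)$ is exactly the bridge that makes this work, and the hypothesis $M\neq\{0\}$ is used only to ensure that the generating set produced is nonempty.
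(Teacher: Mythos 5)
Your proposal is correct and follows exactly the paper's own argument: take $X=\msg(M)$, which is a nonempty finite subset of $\N\backslash\{0\}$, and apply Lemma \ref{32} (parts $(2)$ and $(3)$) to conclude $Coe(X)=Coe(\langle X\rangle)=Coe(M)=M$. Your version merely spells out the details (nonemptiness from $M\neq\{0\}$, exclusion of $0$ from $\msg(M)$) that the paper leaves implicit.
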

\begin{proof}
We have that $\msg(M)$ is a nonempty finite subset $\N\backslash\{0\}$ such that $M=\langle \msg(M)\rangle$. By applying Lemma \ref{32}, we deduce that $M=Coe(\msg(M))$.
\end{proof}

As a consequence of previous proposition we have the following result.
\begin{corollary}\label{34}
The set formed by all Coe-monoids is equal to $\left\{Coe(X) ~|~ X~\text{is a  nonempty finite subset of}~ \N\backslash\{0\}\right\}\cup\left\{\{0\}\right\}$.
\end{corollary}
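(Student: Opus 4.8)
The plan is to establish the claimed set equality by a double inclusion, with essentially all of the substance already contained in Proposition \ref{33}.

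For the inclusion of the set of all Coe-monoids into the right-hand side, I would take an arbitrary Coe-monoid $M$ and distinguish two cases. If $M=\{0\}$, then $M$ already appears on the right-hand side. If $M\neq\{0\}$, then Proposition \ref{33} provides a nonempty finite subset $X$ of $\N\backslash\{0\}$ with $M=Coe(X)$, so $M$ lies in the first part of the union.

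For the reverse inclusion I would verify that every element of the right-hand side is indeed an Coe-monoid. By Proposition \ref{31}, for any subset $X$ of $\N$ the set $Coe(X)$ is an intersection of Coe-semigroups and is therefore an Coe-monoid by definition; in particular this applies to each nonempty finite $X\subseteq\N\backslash\{0\}$. It then remains only to confirm that $\{0\}$ is an Coe-monoid, i.e. that it can be written as an intersection of Coe-semigroups.

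This final point is the one I would treat with a little care, since it is the only step that is not purely formal. The cleanest route is to invoke Lemma \ref{32}(5), which records $Coe(\emptyset)=\{0\}$, and then apply Proposition \ref{31} to conclude that $\{0\}=Coe(\emptyset)$ is an intersection of Coe-semigroups, hence an Coe-monoid. Alternatively, one may exhibit the explicit family $\left\{0,2k,\rightarrow\right\}$ with $k\in\N\backslash\{0\}$: each such set is an Coe-semigroup because its only odd elements exceed $2k$, and any fixed positive integer $n$ is missing from $\left\{0,2k,\rightarrow\right\}$ as soon as $2k>n$, so the intersection over all $k$ equals $\{0\}$. Combining the two inclusions gives the stated description of the set of all Coe-monoids.
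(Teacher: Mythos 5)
Your proof is correct and follows essentially the same route as the paper, which derives the corollary directly from Proposition \ref{33} for the nontrivial inclusion and treats the reverse inclusion as definitional (each $Coe(X)$ is an Coe-monoid by construction, and $\{0\}=Coe(\emptyset)$ is one as well). Your extra care in verifying that $\{0\}$ is an Coe-monoid, e.g.\ via the family $\left\{0,2k,\rightarrow\right\}$, is a sound elaboration of what the paper leaves implicit.
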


Given  $X$ a nonempty finite subset of $\N\backslash\{0\}$, our next aim in this section will be to show a procedure that allows us to compute $Coe(X)$.

\begin{theorem}\label{35}
 Let $M$ be a submonoid of $(\N,+)$ such that $M\neq\{0\}$. The following conditions are equivalent.
\begin{enumerate}
\item $M$ is an Coe-monoid.
\item $\left\{x-1, x+1\right\}\subseteq M$ for all odd element $x$ in $M$.
\item   $\left\{x-1, x+1\right\}\subseteq M$ for all odd element $x$ in $\msg(M)$.
\end{enumerate}
\end{theorem}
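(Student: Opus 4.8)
The plan is to prove the cycle $(1)\Rightarrow(2)\Rightarrow(3)\Rightarrow(1)$, where almost all of the difficulty sits in the last implication and the first two are formal. For $(1)\Rightarrow(2)$ I would invoke Proposition~\ref{31} to write $M=\bigcap_{i\in I}S_i$ with each $S_i$ an Coe-semigroup. If $x\in M$ is odd, then $x\in S_i$ for every $i$, and since each $S_i$ is an Coe-semigroup we get $\{x-1,x+1\}\subseteq S_i$ for all $i$; intersecting over $i$ gives $\{x-1,x+1\}\subseteq M$. The implication $(2)\Rightarrow(3)$ is immediate, since $\msg(M)\subseteq M$.

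For $(3)\Rightarrow(1)$ I would first recover $(2)$ from $(3)$ by repeating the argument of Proposition~\ref{4} at the level of monoids: given an odd $s\in M$, express $s$ as an $\N$-combination of elements of $\msg(M)$; since $s$ is odd, some odd minimal generator $x$ must occur, so $s-x\in M$, and then $s-1=(x-1)+(s-x)\in M$ and $s+1=(x+1)+(s-x)\in M$. It therefore suffices to prove $(2)\Rightarrow(1)$, and here I would split into two cases according to whether $M$ contains an odd element. If $M$ contains an odd element $x$, then by $(2)$ both $x$ and $x-1$ lie in $M$, so $\gcd(M)=1$ and $M$ is in fact a numerical semigroup; condition $(2)$ then says precisely that $M$ is an Coe-semigroup, which is trivially an Coe-monoid, being the intersection of the one-element family $\{M\}$.

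The main obstacle is the remaining case, in which $M$ has no odd element, i.e.\ $M\subseteq\langle 2\rangle$. Here $M$ need not be a numerical semigroup at all (for instance $M=\langle 4\rangle$), so one must genuinely realize $M$ as an intersection of strictly larger Coe-semigroups. My approach is a ``tail'' construction: for each $y\in\N\backslash M$ set
\[
S_y=\begin{cases} M\cup\{z\in\N~|~z\geq y+1\} & \text{if } y \text{ is odd,}\\ M\cup\{z\in\N~|~z\geq y+2\} & \text{if } y \text{ is even.}\end{cases}
\]
One checks that each $S_y$ is a submonoid with finite complement, hence a numerical semigroup, that $M\subseteq S_y$, and that $y\notin S_y$. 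The crucial point is that $S_y$ is an Coe-semigroup: since $M$ is even, every odd element of $S_y$ lies in the added tail, and the offset ($y+1$ for odd $y$, $y+2$ for even $y$) is chosen exactly so that for any odd $x\in S_y$ both $x-1$ and $x+1$ again fall in the tail. The larger offset in the even case is forced precisely because $y+1$ would then be an odd element of the tail whose predecessor $y$ has been excluded.

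Finally, since each $y\in\N\backslash M$ is omitted by its own $S_y$, I would conclude $M=\bigcap_{y\in\N\backslash M}S_y$, exhibiting $M$ as an intersection of Coe-semigroups and so completing $(2)\Rightarrow(1)$. I expect the only delicate part to be the parity bookkeeping in the Coe-verification of $S_y$; everything else is routine.
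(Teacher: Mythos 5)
Your proof is correct, and its key step is essentially the paper's: both arguments realize $M$ as an intersection of Coe-semigroups obtained by adjoining to $M$ a tail beginning at an even number, and your treatment of $(1)\Rightarrow(2)$ and of the equivalence $(2)\Leftrightarrow(3)$ matches the paper's verbatim. The only difference is that the paper's single uniform construction $M_k=M\cup\left\{2k,\rightarrow\right\}$, $k\in\N$, needs no case split on whether $M$ contains an odd element (condition $(2)$ itself covers the odd elements of $M$ when verifying that $M_k$ is an Coe-semigroup), whereas you separate that case out via the $\gcd$ argument; this is harmless but slightly longer.
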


\begin{proof}
The equivalence between conditions 2) and 3)  is analogous to the proof of Proposition \ref{4}.

$1)~ \textit{implies}~ 2)$ If $M$ is an Coe-monoid, then there exists a family  $\{S_i\}_{i\in I}$ of Coe-semigroups such that $M=\bigcap_{i\in I}S_i$. If $x$ is an odd element in $M$, then $x\in S_i$ for all $i\in I$. As  $\left\{x-1, x+1\right\}\subseteq S_i$ for all $i\in I$, then we get that $\left\{x-1, x+1\right\}\subseteq \bigcap_{i\in I}S_i=M$.

$2)~ \textit{implies}~ 1)$  It is clear that $M_k=M\cup \left\{2k,\rightarrow\right\}$ is an Coe-semigroup for all $k\in\N$. Hence,   $M=\bigcap_{k\in \N}M_k$ is an Coe-monoid.
\end{proof}

\begin{corollary}\label{36}
 Let $M$ be a submonoid of $(\N,+)$. Then $M$ is  an Coe-monoid if and only if either $M\subseteq \langle 2\rangle$ or $M$ is an Coe-semigroup.
\end{corollary}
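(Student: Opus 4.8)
The plan is to read off this corollary from the characterization of Coe-monoids in Theorem~\ref{35}, organizing both implications around the single question of whether $M$ contains an odd element. The point is that the dichotomy ``$M\subseteq\langle 2\rangle$ or $M$ is an Coe-semigroup'' is exactly the dichotomy ``$M$ has no odd element, or it has one''; so I would first dispose of the purely even case and then show that the presence of a single odd element already forces the complement of $M$ to be finite.

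First I would treat the implication from right to left. If $M$ is an Coe-semigroup, then $M=\bigcap\{M\}$ exhibits $M$ as an intersection of a (one-element) family of Coe-semigroups, so $M$ is an Coe-monoid by definition. If instead $M\subseteq\langle 2\rangle$, then $M$ has no odd element and condition (2) of Theorem~\ref{35} holds vacuously, so $M$ is an Coe-monoid; the degenerate possibility $M=\{0\}$ is already covered, since $Coe(\emptyset)=\{0\}$ by Lemma~\ref{32}. For the converse, assume $M$ is an Coe-monoid. If $M$ has no odd element then every element is even and $M\subseteq\langle 2\rangle$. Otherwise pick an odd $x\in M$; Theorem~\ref{35} gives $\{x-1,x+1\}\subseteq M$, hence $\langle x-1,x,x+1\rangle\subseteq M$. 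Since $\gcd(x-1,x)=1$ we have $\gcd(x-1,x,x+1)=1$, so $\langle x-1,x,x+1\rangle$ is a numerical semigroup and therefore has finite complement in $\N$; consequently $M$ has finite complement as well and is a numerical semigroup. Being an Coe-monoid, it satisfies the Coe condition by Theorem~\ref{35}, so $M$ is an Coe-semigroup.

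The step I expect to carry the real content is the last one: turning the purely local Coe-condition into the global finiteness demanded of a numerical semigroup. The mechanism is the elementary observation that three consecutive integers $x-1,x,x+1$ are automatically coprime, which upgrades ``$M$ contains one odd element together with its two even neighbours'' into ``$M$ contains a full numerical semigroup''. Once this is in hand, everything else is bookkeeping with the definitions and with Theorem~\ref{35}, and no separate case analysis on the size of $x$ is needed (the value $x=1$ simply yields $\langle x-1,x,x+1\rangle=\N$).
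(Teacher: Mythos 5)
Your proof is correct and follows essentially the same route as the paper's: both sides of the equivalence are read off from Theorem~\ref{35}, with the dichotomy driven by whether $M$ contains an odd element. The only difference is that you spell out the step the paper leaves implicit --- that $\gcd(x-1,x,x+1)=1$ makes $\langle x-1,x,x+1\rangle$ a numerical semigroup, forcing $M$ to have finite complement --- which is a welcome clarification but not a different argument.
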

\begin{proof}
If $M$ is an Coe-monoid which does not contain odd elements, then $M\subseteq \langle 2\rangle$. On the other hand, if  $x$ is an odd element in $M$, then by Theorem \ref{35}, $\left\{x-1, x, x+1\right\}\subseteq M$. Therefore, we can conclude that $M$ is a numerical semigroup which is an Coe-semigroup.

Conversely, if $M\subseteq \langle 2\rangle$, then, by applying condition $2)$ of Theorem \ref{35}, we get that $M$ is an Coe-monoid. Furthermore, if $M$ is an Coe-semigroup, then it is an Coe-monoid.
\end{proof}

\begin{corollary}\label{37}
 If $X$ is a subset of $\N\backslash\{0\}$, then $Coe(X)$ is a submonoid of $(\N,+)$ generated by $X\cup\left\{x+1 ~|~ x ~ \text{is an odd element in}~ X\right\}\cup\left\{x-1 ~|~ x ~ \text{is an odd element in}~ X\right\}$.
\end{corollary}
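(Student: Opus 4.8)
The plan is to set $Y = X\cup\{x+1\mid x\in X\text{ odd}\}\cup\{x-1\mid x\in X\text{ odd}\}$ and $N=\langle Y\rangle$, and to prove $Coe(X)=N$ by a double inclusion. If $X=\emptyset$ then both sides equal $\{0\}$ (by Lemma \ref{32}), so I would assume $X\neq\emptyset$; since $X\subseteq\N\backslash\{0\}$ this forces $N\neq\{0\}$ and $Coe(X)\neq\{0\}$, so that Theorem \ref{35} applies to both monoids.

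For the inclusion $N\subseteq Coe(X)$, I would argue that $Coe(X)$ is a Coe-monoid containing $X$, so by condition $2)$ of Theorem \ref{35} every odd element $x\in X\subseteq Coe(X)$ drags its neighbours $x-1$ and $x+1$ into $Coe(X)$. Hence $Y\subseteq Coe(X)$, and since $Coe(X)$ is a submonoid it contains $\langle Y\rangle=N$.

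The reverse inclusion $Coe(X)\subseteq N$ is the heart of the matter. Because $Coe(X)$ is by definition the smallest Coe-monoid containing $X$, it suffices to exhibit $N$ as a Coe-monoid containing $X$; the containment $X\subseteq Y\subseteq N$ is immediate. The key observation I would exploit is that the adjoined elements $x\pm1$ (for $x\in X$ odd) are all \emph{even}, so the odd elements of $Y$ are precisely the odd elements of $X$. I would then verify the Coe condition through condition $3)$ of Theorem \ref{35}: recalling that the minimal system of generators of a submonoid is contained in every system of generators, one has $\msg(N)\subseteq Y$, so every odd $x\in\msg(N)$ is an odd element of $X$; by the construction of $Y$ both $x-1$ and $x+1$ then lie in $Y\subseteq N$. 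Thus $N$ is a Coe-monoid, and minimality yields $Coe(X)\subseteq N$, completing the proof.

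The main obstacle is this second inclusion, and within it the parity bookkeeping showing that the neighbours we adjoin are even; this is exactly what prevents the construction from cascading, since coating an odd generator never manufactures new odd elements that would in turn need coating. As a self-contained alternative to invoking $\msg(N)\subseteq Y$, I could instead check condition $2)$ of Theorem \ref{35} directly, mimicking the proof of Proposition \ref{4}: any odd $x\in N$ admits an expression as a sum of generators in which some odd generator $y\in X$ occurs, whence $x-1=(y-1)+(x-y)\in N$ and $x+1=(y+1)+(x-y)\in N$.
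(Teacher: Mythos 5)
Your proof is correct and takes essentially the same route as the paper's: both inclusions come from Theorem \ref{35}, using condition $2)$ to get $\langle Y\rangle\subseteq Coe(X)$ and condition $3)$ (via the observation that the adjoined neighbours $x\pm 1$ are even, so the odd minimal generators of $\langle Y\rangle$ come from $X$) to show $\langle Y\rangle$ is a Coe-monoid containing $X$, whence $Coe(X)\subseteq\langle Y\rangle$ by minimality. Your handling of the case $X=\emptyset$ and the explicit parity bookkeeping merely spell out details the paper leaves implicit.
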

\begin{proof}
Let $A=X\cup\left\{x+1 ~|~ x ~ \text{is an odd element in}~ X\right\}\cup\left\{x-1 ~|~ x ~ \text{is an odd element in}~ X\right\}$.
By condition $2)$ of Theorem \ref{35}, we get that $\langle A\rangle \subseteq Coe(X)$ and by condition $3)$ of Theorem \ref{35}, we obtain that $Coe(X)\subseteq A$. Whence,  $\langle A\rangle = Coe(X)$. 
\end{proof}

\begin{example}\label{38}
By using Corollary \ref{37}, we obtain that $Coe(\left\{4,7\right\})=\langle 4,6,7,8\rangle=\langle 4,6,7\rangle$.
\end{example}

 The following result is easy to prove.

\begin{corollary}\label{39}
 Let $X$ be a subset of $\N\backslash\{0\}$. Then, $Coe(X)$ is an Coe-semigroup if and only if $X$ contains at least one odd element.
\end{corollary}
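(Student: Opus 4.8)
The plan is to reduce everything to the two corollaries immediately preceding, namely the generating description in Corollary \ref{37} and the structural dichotomy in Corollary \ref{36}. First I would set $A=X\cup\left\{x+1 ~|~ x ~\text{odd in}~ X\right\}\cup\left\{x-1 ~|~ x ~\text{odd in}~ X\right\}$, so that by Corollary \ref{37} we have $Coe(X)=\langle A\rangle$. The key observation is that, since $Coe(X)$ is by construction always an Coe-monoid, Corollary \ref{36} tells us that it is either contained in $\langle 2\rangle$ or else an Coe-semigroup; these alternatives are mutually exclusive, because $\langle 2\rangle$ contains no odd element and hence any submonoid contained in it has infinite complement in $\N$ and is therefore not a numerical semigroup. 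Thus the whole statement becomes the equivalence ``$X$ contains an odd element if and only if $Coe(X)\nsubseteq\langle 2\rangle$''.

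For the forward implication I would argue as follows. Suppose $X$ contains an odd element $x$. Then $x\in A$, so $\langle A\rangle=Coe(X)$ contains an odd element and in particular $Coe(X)\nsubseteq\langle 2\rangle$. By Corollary \ref{36}, $Coe(X)$ is then an Coe-semigroup. For the converse I would use the contrapositive: if $X$ contains no odd element, then both of the adjoined sets in the definition of $A$ are empty, so $A=X$ consists entirely of even integers, whence $Coe(X)=\langle A\rangle\subseteq\langle 2\rangle$. Since $\langle 2\rangle$ is not a numerical semigroup, $Coe(X)$ cannot be an Coe-semigroup. This establishes both directions.

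There is no substantial obstacle here; the proof is essentially a bookkeeping exercise once Corollaries \ref{36} and \ref{37} are in hand. The only point that requires a moment's care, and the one I would make explicit, is the exclusivity of the two cases in Corollary \ref{36}: one must note that a submonoid contained in $\langle 2\rangle$ omits every odd integer and therefore fails to have finite complement, so the cases ``$M\subseteq\langle 2\rangle$'' and ``$M$ is an Coe-semigroup'' never overlap. With that remark, invoking Corollary \ref{36} turns the membership question about odd elements of $X$ directly into the desired characterization.
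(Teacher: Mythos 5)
Your proof is correct and follows exactly the route the paper intends: the paper omits the argument as ``easy to prove,'' and the natural proof is precisely your combination of Corollary \ref{37} (so that $Coe(X)=\langle A\rangle$ contains an odd element if and only if $X$ does) with the dichotomy of Corollary \ref{36}. Your explicit remark that the two alternatives in Corollary \ref{36} are mutually exclusive --- since a submonoid of $\langle 2\rangle$ omits every odd integer and so cannot be a numerical semigroup --- is the one point worth spelling out, and you handle it correctly.
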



\section{Coe-semigroups with maximal embedding dimension}\label{S6}

It is known that, if $S$ is a numerical semigroup, then its embedding dimension, $\e(S)$, is less than or equal to its multiplicity, $\m(S)$ (see \cite[Proposition $2.10$]{libro}. And $S$ has maximal embedding dimension (Med-semigroup, for short) whenever  $\e(S)=\m(S)$.
From \cite[Proposition $I.2.9$]{barucci} we can deduce the next result.

\begin{lemma}\label{40}
Let $S$ be a numerical semigroup. Then $S$ is an Med-semigroup if and only if  $\left\{s-\m(S) ~|~ s\in S\backslash \{0\}\right\}$ is a numerical semigroup.
\end{lemma}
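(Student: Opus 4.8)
The plan is to set $m=\m(S)$ and $T=\{\,s-m \mid s\in S\setminus\{0\}\,\}$, and to strip the statement down to a single closure property of $T$. Since $m\in S\setminus\{0\}$ we have $0\in T$; since every nonzero element of $S$ is $\geq m$ we have $T\subseteq\N$; and $T$ is cofinite because $S$ is (for $n$ large, $n+m\in S\setminus\{0\}$, so $n\in T$). Hence the only requirement that can fail is closure under addition. Noting that for $t\in\N$ one has $t\in T$ exactly when $t+m\in S$, and that $(s-m)+(s'-m)=\big(s+s'-m\big)-m$, closure rewrites as the condition $(\ast)$: $s+s'-m\in S$ for all $s,s'\in S\setminus\{0\}$ (here $s+s'-m\geq m>0$ is automatic, so membership in $S$ is the only constraint). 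Thus I reduce the lemma to showing that $S$ is an Med-semigroup if and only if $(\ast)$ holds.

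To connect $(\ast)$ with maximal embedding dimension I would bring in the Ap\'ery set $\Ap(S,m)=\{w\in S \mid w-m\notin S\}$. The facts I rely on are that $\Ap(S,m)$ has exactly $m$ elements, contains $0$ but not $m$, and that every minimal generator of $S$ other than $m$ lies in $\Ap(S,m)\setminus\{0\}$ (if $g\in\msg(S)$ with $g\neq m$ then $g-m\notin S$, else $g=m+(g-m)$ would be reducible). Consequently $\msg(S)\subseteq\{m\}\cup\big(\Ap(S,m)\setminus\{0\}\big)$, a set of cardinality $m$, so $\e(S)\leq m$ with equality precisely when \emph{every} element of $\Ap(S,m)\setminus\{0\}$ is a minimal generator. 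This is the description of Med-semigroups I will feed to both implications, and it is exactly the content extracted from \cite[Proposition $I.2.9$]{barucci}.

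For $(\ast)\Rightarrow\text{Med}$ I would argue by contrapositive: if $S$ is not an Med-semigroup, some $w\in\Ap(S,m)\setminus\{0\}$ fails to be a minimal generator, hence $w=a+b$ with $a,b\in S\setminus\{0\}$; then $(\ast)$ forces $w-m=a+b-m\in S$, contradicting $w\in\Ap(S,m)$. For the converse, given $s,s'\in S\setminus\{0\}$ I write them via their Ap\'ery decompositions $s=w+km$ and $s'=w'+k'm$ with $w,w'\in\Ap(S,m)$ and $k,k'\geq 0$; when $k+k'\geq 1$ one immediately gets $s+s'-m=(w+w')+(k+k'-1)m\in S$ since $w+w'\in S$. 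The only delicate case, and the step I expect to be the main obstacle, is $k=k'=0$, i.e.\ $s=w$ and $s'=w'$ both nonzero elements of $\Ap(S,m)$: here $w+w'$ is a sum of two nonzero elements of $S$, hence not a minimal generator, so by the Med hypothesis it cannot lie in $\Ap(S,m)\setminus\{0\}$; as $w+w'\neq 0$ this yields $w+w'\notin\Ap(S,m)$, which by definition of the Ap\'ery set gives $w+w'-m\in S$. This establishes $(\ast)$ and closes the equivalence.
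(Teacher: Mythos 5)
Your proof is correct. Note, however, that the paper does not actually prove this lemma: it simply deduces it from \cite[Proposition $I.2.9$]{barucci}, so your argument is not a variant of the paper's proof but a self-contained replacement for the citation. Your route is the natural one: you first reduce the statement to the closure condition $(\ast)$ (the checks that $0\in T$, $T\subseteq\N$ and $T$ is cofinite are exactly right), and then you prove the equivalence of $(\ast)$ with maximal embedding dimension through the characterization ``$S$ is Med if and only if every nonzero element of $\mathrm{Ap}(S,\m(S))$ is a minimal generator,'' which you derive correctly from the counting argument $\msg(S)\subseteq\{\m(S)\}\cup\big(\mathrm{Ap}(S,\m(S))\setminus\{0\}\big)$. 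Both implications are sound: the contrapositive direction correctly exploits that a non-generator Ap\'ery element splits as $a+b$ with $a,b\in S\setminus\{0\}$, and in the converse the case analysis on the Ap\'ery decompositions $s=w+k\m(S)$, $s'=w'+k'\m(S)$ is complete, with the genuinely delicate case $k=k'=0$ handled properly (a sum of two nonzero elements cannot be a minimal generator, hence by the Med hypothesis cannot be a nonzero Ap\'ery element, hence $w+w'-\m(S)\in S$). What your approach buys is self-containedness using only tools the paper itself introduces (the Ap\'ery set of Lemma \ref{44}, which, amusingly, the paper only states \emph{after} this lemma); what the paper's citation buys is brevity, since Barucci's Proposition I.2.9 packages this equivalence, and several others, in one reference.
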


\begin{proposition}\label{41}
Let $S$ be an Med-semigroup such that $S\neq \N$. Then $S$ is an Coe-semigroup if and only if $\m(S)$ is even and $T=\{-\m(S)\}+\big(S\backslash \{0\}\big)$ is an Coe-semigroup.
\end{proposition}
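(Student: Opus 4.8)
The plan is to reduce the whole statement to a single membership translation between $S$ and $T$ together with a parity count. Write $m=\m(S)$. First I would invoke Lemma~\ref{40}: since $S$ is an Med-semigroup, $T=\{-m\}+(S\setminus\{0\})$ is a numerical semigroup, so asking whether it is an Coe-semigroup makes sense. The one fact I would isolate at the start is the equivalence
\[
t\in T \iff t+m\in S \qquad\text{for every } t\in\N ,
\]
which holds because every nonzero element of $S$ is $\geq m\geq 2$: if $t+m\in S$ then $t+m\in S\setminus\{0\}$ and $t=(t+m)-m\in T$, while any $t\in T$ equals $s-m$ with $s\in S\setminus\{0\}$, so $t+m=s\in S$. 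Everything else is moving odd elements and their two neighbours across this equivalence, and the role of the parity of $m$ is precisely to keep ``odd'' on one side matched with ``odd'' on the other.

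For the forward implication I would assume $S$ is an Coe-semigroup and first apply Proposition~\ref{1} to obtain that $m$ is even (this is the only place $S\neq\N$ enters). Then, to show $T$ is Coe, I would take an odd $x\in T$, translate to $x+m\in S$ (which is odd, since $m$ is even), use the Coe property of $S$ to place $(x+m)\pm1\in S$, rewrite $(x+m)\pm1=(x\pm1)+m$, and translate back to obtain $x\pm1\in T$; here $x-1\geq0$ because $x\geq1$, so the translation legitimately applies.

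For the converse I would assume $m$ even and $T$ Coe, take an odd $s\in S$, and note that $s\neq0$ forces $s\geq m$ while the parity mismatch forces $s>m$, so $t:=s-m$ is a positive odd element of $T$ by the translation. The Coe property of $T$ then gives $t\pm1\in T$, and translating back produces $s-1=(t-1)+m\in S$ and $s+1=(t+1)+m\in S$, so $\{s-1,s+1\}\subseteq S$ and $S$ is an Coe-semigroup.

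The step I expect to require the most care is not any computation but the placement of the parity hypothesis: the equivalence ``$S$ Coe $\iff$ $T$ Coe'' is only valid once $m$ is even, and when $m$ is odd both sides of the proposition are false (Proposition~\ref{1} forbids $S$ from being Coe), which is exactly why ``$m$ even'' must be stated as a separate conjunct rather than absorbed into the claim about $T$. I would also keep an eye on the nonnegativity of the shifted indices in each direction so that the translation is applied only where it is valid.
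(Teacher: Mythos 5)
Your proposal is correct and follows essentially the same route as the paper's proof: Lemma~\ref{40} to know $T$ is a numerical semigroup, Proposition~\ref{1} to get that $\m(S)$ is even, and then translating odd elements and their neighbours back and forth by $\m(S)$ in both directions. Your explicit isolation of the equivalence $t\in T \iff t+\m(S)\in S$ and the parity/nonnegativity checks simply make precise what the paper's argument does implicitly.
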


\begin{proof}
$Necessity$.  From Lemma \ref{40}, we have that $T$ is a numerical  semigroup and by Proposition \ref{1}, $\m(S)$ is even. Let $t$ be an odd element in $T$. Then there exists  $s\in S$ such that $t=s-\m(S)$ and  $s$ is odd. Since $S$ is an Coe-semigroup then $\left\{s-1,s+1\right\}\subseteq S$ and so $\left\{t-1,t+1\right\}=\left\{s-1-\m(S), s+1-\m(S)\right\}\subseteq T$. Hence, $T$ is an Coe-semigroup.

$Sufficiency$.  Let $s$ be an odd element in $S$. Then $s-\m(S)$ is an odd element in $T$. As $T$ is an Coe-semigroup then $\left\{s-1-\m(S),s+1-\m(S)\right\}\subseteq T$ and so  $\left\{s-1,s+1\right\}\subseteq S$. Consequently, $S$ is an Coe-semigroup.
\end{proof}

From Lemma \ref{40}, is easy to prove the following result.

\begin{lemma}\label{42}
If $S$ is a numerical semigroup and  $x\in S\backslash \{0\}$, then $S_x=\big(\{x\}+S\big)\cup\{0\}$ is an Med-semigroup with 
$\m(S_x)=x$. Moreover, every Med-semigroup is of this form.
\end{lemma}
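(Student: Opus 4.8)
The plan is to prove the two assertions separately, exploiting the fact that the construction $S \mapsto S_x$ is inverse to the shift operation $T \mapsto \{t - \m(T) \mid t \in T \setminus \{0\}\}$ that appears in Lemma \ref{40}.

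First I would verify that $S_x = (\{x\} + S) \cup \{0\}$ is genuinely a numerical semigroup. It contains $0$; it is closed under addition, since for $s_1, s_2 \in S$ the sum $(x + s_1) + (x + s_2) = x + (x + s_1 + s_2)$ lies in $\{x\} + S$ because $x \in S$; and its complement is finite, because an integer $n \geq x$ fails to lie in $S_x$ precisely when $n - x \notin S$, and $\N \setminus S$ is finite. Since every nonzero element of $S_x$ is at least $x = x + 0$, we get $\m(S_x) = x$. To see that $S_x$ is an Med-semigroup I would invoke Lemma \ref{40}: the set $\{t - \m(S_x) \mid t \in S_x \setminus \{0\}\}$ equals $\{(x + s) - x \mid s \in S\} = S$, which is a numerical semigroup by hypothesis; hence $S_x$ has maximal embedding dimension.

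For the converse, let $M$ be an arbitrary Med-semigroup and put $x = \m(M)$. By Lemma \ref{40} the set $T = \{s - x \mid s \in M \setminus \{0\}\}$ is a numerical semigroup. I would then check that $M = T_x$: indeed $T_x \setminus \{0\} = \{x\} + T = \{x + (s - x) \mid s \in M \setminus \{0\}\} = M \setminus \{0\}$, and adjoining $0$ recovers $M$. The one point requiring care is confirming $x \in T \setminus \{0\}$, as demanded by the statement; this holds because $2x = x + x \in M$, so $2x - x = x \in T$, while $x = \m(M) \geq 1$. This exhibits $M$ in the required form $M = T_x$ with $T$ a numerical semigroup and $x \in T \setminus \{0\}$.

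The computations are elementary, so the main obstacle is essentially bookkeeping: one must keep straight the two shifts (adding $x$ versus subtracting $\m$) and confirm that they invert each other, while not overlooking the small verification that the shifted base $T$ actually contains the relevant multiplicity $x$ as a nonzero element.
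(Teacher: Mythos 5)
Your proof is correct and follows exactly the route the paper intends: the paper states that Lemma~\ref{42} follows easily from Lemma~\ref{40}, and your argument is precisely that deduction, applying Lemma~\ref{40} in the forward direction to get maximal embedding dimension and in the converse to recover $M$ as $T_x$ with $T=\{s-\m(M) \mid s\in M\setminus\{0\}\}$. The extra care you take in checking $x\in T\setminus\{0\}$ is a worthwhile detail that the paper glosses over.
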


\begin{proposition}\label{43}
Let $S$ be an Coe-semigroup and let $x$ be an even element in $S\backslash \{0\}$. Then $S_x=\big(\{x\}+S\big)\cup\{0\}$ is an Coe-semigroup with maximal embedding dimension. Moreover, every Coe-semigroup with maximal embedding dimension, distinct of $\N$, is of this form.
\end{proposition}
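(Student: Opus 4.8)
The plan is to derive both assertions directly from Lemma \ref{42} and Proposition \ref{41}, the point being that these two results are tailored to interact precisely here: Lemma \ref{42} provides the bijective correspondence between Med-semigroups and pairs $(S,x)$, while Proposition \ref{41} detects exactly when such a Med-semigroup is a Coe-semigroup.

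For the first claim, I would begin by applying Lemma \ref{42} to the numerical semigroup $S$ and the element $x\in S\backslash\{0\}$, which immediately yields that $S_x=\big(\{x\}+S\big)\cup\{0\}$ is a Med-semigroup with $\m(S_x)=x$. Since $x$ is even and positive we have $x\geq 2$, so $S_x\neq\N$ and Proposition \ref{41} is applicable. The key computation is to identify the auxiliary semigroup appearing in that proposition: because every element of $\{x\}+S$ is at least $x>0$, we have $S_x\backslash\{0\}=\{x\}+S$, and hence $\{-\m(S_x)\}+\big(S_x\backslash\{0\}\big)=\{-x\}+\big(\{x\}+S\big)=S$. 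As $\m(S_x)=x$ is even and $S$ is a Coe-semigroup by hypothesis, the sufficiency direction of Proposition \ref{41} gives that $S_x$ is a Coe-semigroup, which completes the first claim.

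For the converse, I would take an arbitrary Coe-semigroup $U$ with maximal embedding dimension and $U\neq\N$. By the second part of Lemma \ref{42}, $U=S_x$ with $x=\m(U)$ and $S=\{-x\}+\big(U\backslash\{0\}\big)$ a numerical semigroup. Applying the necessity direction of Proposition \ref{41} to $U$ gives at once that $x=\m(U)$ is even and that $S$ is a Coe-semigroup. It then remains only to verify that $x$ is an even element of $S\backslash\{0\}$, so that the expression $S_x$ is legitimate: since $x=\m(U)\in U$ and $U$ is closed under addition, $2x\in U\backslash\{0\}$, whence $x=2x-x\in S$, and clearly $x\geq 2$. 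Thus $U=S_x$ with $S$ a Coe-semigroup and $x$ an even element of $S\backslash\{0\}$, which is exactly the asserted form. The argument is essentially bookkeeping on top of Lemma \ref{42} and Proposition \ref{41}, with no deep obstacle; the two places needing genuine care are the identification $\{-\m(S_x)\}+\big(S_x\backslash\{0\}\big)=S$ in the first part, which relies on $0\notin\{x\}+S$, and the verification $x\in S\backslash\{0\}$ in the converse, which relies on $2x\in U$. Both hinge only on the positivity of $x$ and closure under addition.
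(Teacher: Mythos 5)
Your proof is correct and follows essentially the same route as the paper: Lemma \ref{42} to obtain the Med structure with $\m(S_x)=x$, and Proposition \ref{41} applied in both directions. You are in fact slightly more careful than the paper's own proof, which in the converse simply asserts that $\m(T)$ is an element of the translated semigroup $Q$; your verification that $2x\in U\backslash\{0\}$ forces $x\in S\backslash\{0\}$ supplies exactly that omitted bookkeeping.
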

\begin{proof}
By using Lemma \ref{42}, we know that $S_x$ is an Med-semigroup with $\m\big(S_x)=x$. Clearly, that $S=\{-x\}+\big(S_x\backslash \{0\}\big)$ and by Proposition \ref{41} we get that $S_x$ is an Coe-semigroup.

Now, let $T$ be an Coe-semigroup with maximal embedding dimension, distinct from $\N$. By Proposition \ref{1}, we have that $\m(T)$ is even and, by Proposition \ref{41}, we deduce that $Q=\{-\m(T)\}+\big(T\backslash \{0\}\big)$ is an Coe-semigroup. Finally, $T=\big(\{\m(T)\}+Q\big)\cup\{0\}$   wherein $Q$ is an Coe-semigroup and $\m(T)$ is an even element in $Q$.
\end{proof}

Let $S$ be a numerical semigroup and let $n\in S\backslash\{0\}$. The Apéry set (named so in honour of \cite {apery}) of $n$ in $S$ is
\[\mathrm{Ap}(S,n)= \left\{ s \in S ~|~ s-n \not\in S\right\}.\]

\begin{lemma}\cite[Lemma 2.4]{libro}\label{44}
Le $S$ be a numerical semigroup and $n\in S\backslash\{0\}$. Then $\mathrm{Ap}(S,n) = \{0 = w(0), w(1), \ldots, w(n-1)\}$, where $w(i)$  is the least element in $S$  congruent with $i$ modulo $n$, for all $i\in\left\{0,\ldots,n-1\right\}$.
\end{lemma}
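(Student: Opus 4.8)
The plan is to prove the claimed description of $\mathrm{Ap}(S,n)$ by matching each residue class modulo $n$ with its smallest representative in $S$. First I would check that the numbers $w(i)$ are well defined. Since $S$ is a numerical semigroup, $\N\backslash S$ is finite, so there is an integer $N$ with $\{N,N+1,\ldots\}\subseteq S$; hence for each $i\in\{0,\ldots,n-1\}$ the set $\{s\in S~|~s\equiv i~(\mathrm{mod}~n)\}$ is nonempty (it contains every sufficiently large integer of that class) and, being a set of nonnegative integers, it has a least element, which we call $w(i)$. Clearly $w(0)=0$, since $0\in S$ is the smallest element of its class.

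Next I would prove the inclusion $\{w(0),\ldots,w(n-1)\}\subseteq \mathrm{Ap}(S,n)$. Fix $i$ and suppose, for contradiction, that $w(i)-n\in S$. Then $w(i)-n$ would be an element of $S$ congruent to $i$ modulo $n$ and strictly smaller than $w(i)$, contradicting the minimality of $w(i)$. Hence $w(i)-n\notin S$, so $w(i)\in\mathrm{Ap}(S,n)$ directly from the definition of the Apéry set.

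For the reverse inclusion, I would take $s\in\mathrm{Ap}(S,n)$ and let $i\in\{0,\ldots,n-1\}$ be the residue of $s$ modulo $n$. By minimality $w(i)\leq s$, and since both are congruent to $i$ modulo $n$ we may write $s-w(i)=kn$ for some $k\in\N$. If $k\geq 1$, then using $w(i)\in S$, $n\in S$ and the closure of $S$ under addition we get $s-n=w(i)+(k-1)n\in S$, contradicting $s\in\mathrm{Ap}(S,n)$; therefore $k=0$ and $s=w(i)$. Combining both inclusions yields $\mathrm{Ap}(S,n)=\{w(0),\ldots,w(n-1)\}$, and since these elements lie in pairwise distinct residue classes they are distinct, so the Apéry set has exactly $n$ elements. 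The only points requiring care are the appeal to the finiteness of $\N\backslash S$ to guarantee that every residue class meets $S$ (so that $w(i)$ exists), and the use of $n\in S$ together with closure under addition in the final step; beyond these the argument is elementary, so I do not anticipate a genuine obstacle.
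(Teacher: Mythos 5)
Your proof is correct and complete: the well-definedness of the $w(i)$ via the finiteness of $\N\backslash S$, the minimality argument giving $w(i)\in\mathrm{Ap}(S,n)$, and the use of $n\in S$ with closure under addition for the reverse inclusion are exactly what is needed. The paper itself offers no proof (it cites the lemma from the reference \cite{libro}), and your argument is essentially the standard one given there, so there is nothing to reconcile.
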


Observe that the above lemma in particular implies that the cardinality of Ap$(S, n)$ is $n$. From \cite{belga}, we can deduce the next result.

\begin{proposition}\label{45}
Le $S$ be a numerical semigroup, $x\in S\backslash\{0\}$ and $T=(\{x\}+S)\cup \{0\}$ . Then the following conditions hold:
\begin{enumerate}
\item $T$ is an MED-semigroup
\item $\m(T)=x$.
\item  $\F(T)=\F(S)+x$.
\item $\g(T)=\g(S)+x-1$.
\item  $\msg(T)=\mathrm{Ap}(S,x)+\{x\}$.
\end{enumerate}
\end{proposition}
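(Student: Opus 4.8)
\emph{Plan of proof.} I would treat the five claims in the stated order, since the first two are immediate and supply the tools for the rest. For (1) and (2) I would simply invoke Lemma \ref{42}, which states precisely that $S_x=(\{x\}+S)\cup\{0\}$ is an Med-semigroup with $\m(S_x)=x$; as $T=S_x$ this is exactly (1) and (2). In particular, being an Med-semigroup, $T$ satisfies $\e(T)=\m(T)=x$, a fact I will reserve for the proof of (5).

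The observation driving (3) and (4) is that, for a positive integer $n$, one has $n\in T$ if and only if $n-x\in S$, because the nonzero elements of $T$ are exactly those of the form $x+s$ with $s\in S$ and $0\in S$. For (3), the largest $n$ with $n-x\notin S$ is obtained by taking $n-x$ to be the largest gap of $S$, namely $n-x=\F(S)$; any $n>\F(S)+x$ satisfies $n-x>\F(S)$, hence $n-x\in S$ and $n\in T$, so $\F(T)=\F(S)+x$. For (4), I would split the gaps of $T$ into those below $x$ and those at least $x$: the integers $1,\ldots,x-1$ are all gaps of $T$, since they are positive and smaller than the least nonzero element $x$ of $T$, giving $x-1$ gaps; meanwhile the translation $n\mapsto n-x$ is a bijection between the gaps of $T$ that are $\geq x$ and the gaps of $S$, contributing $\g(S)$. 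Hence $\g(T)=\g(S)+x-1$.

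The substance of the proposition is (5), and here I would proceed in three steps. Write $A=\mathrm{Ap}(S,x)+\{x\}$. First, $A\subseteq T$ is clear, and $|A|=x$, since the translation $s\mapsto s+x$ is injective and $|\mathrm{Ap}(S,x)|=x$ by Lemma \ref{44}. Second, I would show $T=\langle A\rangle$: given $t\in T\setminus\{0\}$, write $t=x+s$ with $s\in S$; by Lemma \ref{44}, $s=w(i)+kx$, where $w(i)\in\mathrm{Ap}(S,x)$ is the least element of $S$ congruent to $s$ modulo $x$ and $k\in\N$. Since $0\in\mathrm{Ap}(S,x)$ gives $x=x+0\in A$ and also $x+w(i)\in A$, we obtain $t=(x+w(i))+k\,x\in\langle A\rangle$; as $A\subseteq T$, the reverse inclusion is trivial, so $T=\langle A\rangle$.

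The final step, minimality, is the main obstacle. Since any generating set of a numerical semigroup must contain its unique minimal system of generators, we have $A\supseteq\msg(T)$; but $|\msg(T)|=\e(T)=\m(T)=x$ by (1) and (2), while $|A|=x$, so the inclusion of equal finite sets forces $A=\msg(T)$, which is (5). Alternatively, one may compute $\mathrm{Ap}(T,x)$ directly: for $s\in S$ one checks that $x+s\in\mathrm{Ap}(T,x)\setminus\{0\}$ if and only if $s\in\mathrm{Ap}(S,x)\setminus\{0\}$, whence $\{x\}\cup(\mathrm{Ap}(T,x)\setminus\{0\})=A$, and the standard description of the minimal generators of an Med-semigroup yields the claim. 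The only delicate point throughout is the cardinality matching in this last step, so I would take care to justify both $|A|=x$ and $\e(T)=x$ explicitly.
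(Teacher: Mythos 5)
Your proof is correct, but it cannot be compared line-by-line with the paper's, because the paper offers no proof at all: Proposition \ref{45} is introduced with the sentence ``From \cite{belga}, we can deduce the next result'', i.e.\ it is imported from the reference on principal ideals of numerical semigroups. Your argument is therefore a genuinely different, self-contained route. Items (1) and (2) come from Lemma \ref{42} (already proved in the paper); items (3) and (4) from the elementary equivalence $n\in T\setminus\{0\}\Longleftrightarrow n-x\in S$, with the gaps of $T$ correctly split into the $x-1$ gaps below $x$ and a translated copy of the gaps of $S$; and item (5) --- the only substantial one --- from two ingredients: the Ap\'ery decomposition $s=w(i)+kx$ of Lemma \ref{44}, which shows $T=\langle \mathrm{Ap}(S,x)+\{x\}\rangle$, and the cardinality squeeze $\msg(T)\subseteq \mathrm{Ap}(S,x)+\{x\}$ together with $|\msg(T)|=\e(T)=\m(T)=x=|\mathrm{Ap}(S,x)+\{x\}|$, which forces equality. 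Note that the squeeze uses the MED property (1) and the multiplicity (2), but you established those independently via Lemma \ref{42}, so there is no circularity. What your approach buys is that Section 6 becomes self-contained, depending only on Lemmas \ref{42} and \ref{44} instead of an external citation; what the paper's citation buys is brevity. One pedantic caveat, which is a defect of the statement rather than of your proof: in the degenerate case $S=\N$, $x=1$ one has $T=\N$, so $\F(T)=-1$ while $\F(S)+x=0$, hence item (3) silently assumes $T\neq\N$; all the other items, and your arguments for them, hold in every case.
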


\begin{example}\label{46}
From Example \ref{5} we have that $S=\langle 4,6,7\rangle=\left\{0,4,6,7,8,10,\rightarrow\right\}$ is an Coe-semigroup. Since $6$ is an even element in $S$, then by applying Propositions \ref{43} and \ref{45}, we obtain that $T=(\{6\}+S)\cup \{0\}$ is an Coe-semigroup with maximal embedding dimension, $\m(T)=6$, $\F(T)=9+6=15$, $\g(T)=5+6-1=10$ and 
$\msg(T)=\mathrm{Ap}(S,6)+\{6\}=\left\{0,4,7,8,11,15\right\}+\{6\}=\left\{6,10,13,14,17,21\right\}$.
\end{example}


\section{Coe-semigroups with an unique odd minimal generator}\label{S7}

Our first aim in this section is to prove Theorem \ref{49}, which can be used to construct whole set of Coe-semigroups with an unique odd minimal generator.

\begin{lemma}\label{47}
Let $S$ be a numerical semigroup and $\left\{s,s+1\right\}\subseteq S$. Then $T=2S\cup \big(\{2s+1\}+2S\big)$ is an Coe-semigroup. Moreover, $2s+1$ is the unique odd minimal generator in $T$.
\end{lemma}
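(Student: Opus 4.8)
The plan is to verify the three assertions in turn: that $T$ is a numerical semigroup, that it is a Coe-semigroup, and that $2s+1$ is its unique odd minimal generator. The single fact that makes everything work is that $2s+1 = s + (s+1) \in S$, which holds because $\{s,s+1\}\subseteq S$ and $S$ is closed under addition.

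First I would show $T$ is a submonoid of $(\N,+)$. Clearly $0 = 2\cdot 0 \in 2S \subseteq T$, so only closure under addition is at issue, and I would check it by parity. For two even elements $2a, 2b$ with $a,b\in S$ the sum is $2(a+b)\in 2S$. For an even $2a$ and an odd $2s+1+2b$ the sum is $2s+1+2(a+b)\in\{2s+1\}+2S$. The only delicate case is odd plus odd: $(2s+1+2a)+(2s+1+2b) = 2\big((2s+1)+a+b\big)$, and here I would invoke $2s+1\in S$ to conclude $(2s+1)+a+b\in S$, so the sum lies in $2S$. To see that $T$ is in fact a numerical semigroup, it then suffices to note that $2S$ contains all sufficiently large even integers and $\{2s+1\}+2S$ all sufficiently large odd integers (since $S$ contains all sufficiently large integers), so $\N\backslash T$ is finite.

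Next, to show $T$ is a Coe-semigroup, I would take an arbitrary odd element $x\in T$, necessarily of the form $x = 2s+1+2a$ with $a\in S$, and check directly that $x-1$ and $x+1$ lie in $T$: indeed $x-1 = 2(s+a)$ with $s+a\in S$, and $x+1 = 2(s+1+a)$ with $s+1+a\in S$ (using $s,s+1,a\in S$), so both are in $2S\subseteq T$. Alternatively one could restrict to odd $x\in\msg(T)$ and apply Proposition \ref{4}, but the direct check is equally short.

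Finally, for the claim about minimal generators, the key is a parity decomposition argument. Every odd element of $T$ has the shape $2s+1+2a$ with $a\in S$. If $a\neq 0$, then $2s+1+2a = (2s+1)+2a$ exhibits it as a sum of two nonzero elements of $T$ (with $2s+1\in T$ and $2a\in 2S\backslash\{0\}$), so it is not a minimal generator. It remains to see that $2s+1$ itself is a minimal generator: if $2s+1 = u+v$ with $u,v\in T\backslash\{0\}$, then by parity one summand is odd, say $u = 2s+1+2a$, and the other even, $v = 2b$ with $b\in S\backslash\{0\}$; but then $u+v = 2s+1+2(a+b) > 2s+1$, a contradiction. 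Hence $2s+1\in\msg(T)$ and it is the only odd minimal generator. I do not expect any genuine obstacle here; the whole argument rests on the observation $2s+1\in S$, which simultaneously closes the odd-plus-odd case and drives the minimality argument.
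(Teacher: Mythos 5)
Your proof is correct and follows essentially the same route as the paper: the same parity case analysis (hinging on $2s+1=s+(s+1)\in S$) gives closure and the numerical-semigroup property, and the Coe condition and the minimal-generator claim follow by elementary parity arguments. The only cosmetic differences are that the paper verifies the Coe condition via Proposition \ref{4} (using $\{2s,2s+2\}\subseteq T$) and settles the generator claim by exhibiting the generating set $\{2n_1,\ldots,2n_p,2s+1\}$ where $\msg(S)=\{n_1,\ldots,n_p\}$, whereas you check all odd elements directly and prove minimality of $2s+1$ by a decomposition argument --- your treatment of the last point is, if anything, more detailed than the paper's, which states it without proof.
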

\begin{proof}
The sum of two elements in $2S$ belongs to $2S$. The sum of  two elements in $\{2s+1\}+2S$ belongs to $2S$. The sum of an element in $2S$ and an element in $\{2s+1\}+2S$ belongs to $\{2s+1\}+2S$. As a result of the previous observations, we can conclude that $T$ is a numerical semigroup. Besides, if $\msg (S)=\left\{n_1,\ldots,n_p\right\}$, then $\left\{2n_1,\ldots,2n_p, 2s+1\right\}$ is a system of generators of $T$ and $2s+1$ is the unique odd minimal generator in $T$. Since $\left\{s,s+1\right\}\subseteq S$, then $\left\{2s,2s+2\right\}\subseteq T$ and so, by Proposition \ref{4}, we obtain that  $T$ is an Coe-semigroup.
\end{proof}

\begin{lemma}\label{48}
Let $T$ be an Coe-semigroup, with $T\neq \N$, and $x$ the unique odd minimal generator in $T$. Then there exists a numerical semigroup $S$ such that $T=2S\cup \big(\{x\}+2S\big)$ and $\left\{\frac{x-1}{2},\frac{x-1}{2}+1\right\}\subseteq S$.
\end{lemma}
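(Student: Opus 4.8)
The plan is to reconstruct $S$ directly from $T$ as the ``halved even part'' of $T$ and then verify the three required properties. Concretely, I would set $E=\{t\in T \mid t \text{ is even}\}$ and define $S=\{t/2 \mid t\in E\}$, so that by construction $2S=E$. The motivation is that in the target decomposition $T=2S\cup(\{x\}+2S)$ the even elements of $T$ must be exactly $2S$, so there is essentially no freedom: $S$ has to be the halved even part.

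First I would check that $S$ is a numerical semigroup. Since $0\in E$ we get $0\in S$, and if $a,b\in S$ then $2a,2b\in E\subseteq T$ give $2(a+b)=2a+2b\in T$, which is even, hence $a+b\in S$; thus $S$ is a submonoid of $(\N,+)$. Finiteness of $\N\backslash S$ follows because $\N\backslash T$ is finite: once all integers beyond some bound lie in $T$, all sufficiently large even integers lie in $E$, and hence all sufficiently large integers lie in $S$.

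The key step is the identity $T=2S\cup(\{x\}+2S)$, which splits into the even and odd parts of $T$. The even part equals $2S=E$ by definition. For the odd part I would argue that every odd element $o\in T$ satisfies $o-x\in T$. Writing $\msg(T)=\{x,e_1,\dots,e_m\}$ with the $e_i$ even (recall $x$ is the \emph{unique} odd minimal generator), any $o\in T$ has the form $o=a\,x+\sum_i b_i e_i$; oddness of $o$ forces $a$ to be odd, in particular $a\geq 1$, so $o-x=(a-1)x+\sum_i b_i e_i$ is still a nonnegative combination of generators, hence $o-x\in T$, and it is even, so $o-x\in E=2S$ and $o\in\{x\}+2S$. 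The reverse inclusion is immediate since $x$ is odd and $2S\subseteq T$ consists of even elements, so $\{x\}+2S\subseteq T$ and every such sum is odd. This ``at least one copy of $x$'' argument, resting on $x$ being the only odd generator, is the crux of the proof.

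Finally, $\left\{\frac{x-1}{2},\frac{x-1}{2}+1\right\}\subseteq S$ follows directly from the Coe hypothesis: since $x$ is odd and $T$ is an Coe-semigroup, $\{x-1,x+1\}\subseteq T$, and both $x-1$ and $x+1$ are even, hence lie in $E$; dividing by $2$ gives $\frac{x-1}{2}\in S$ and $\frac{x+1}{2}=\frac{x-1}{2}+1\in S$. The only point that needs genuine care throughout is the generator-representation argument used in the key step, and even that becomes routine once one records that every minimal generator of $T$ other than $x$ is even.
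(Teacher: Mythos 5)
Your proof is correct, and it takes a genuinely different route from the paper's. The paper builds $S$ from generators: it sets $A=\left\{a\in\msg(T) \mid a \text{ even}\right\}$, uses the Coe hypothesis to get $\{x-1,x+1\}\subseteq T$ and hence $\{x-1,x+1\}\subseteq \langle A\rangle$ (an assertion which itself hides exactly the parity-and-minimality argument you make explicit), deduces $\gcd(A)=2$ so that $S=\langle \{a/2 \mid a\in A\}\rangle$ is a numerical semigroup, and finally uses $2x=(x-1)+(x+1)\in\langle A\rangle$ to split each element of $T$ according to the parity of the coefficient of $x$, obtaining $T=\langle A\rangle\cup\big(\{x\}+\langle A\rangle\big)=2S\cup\big(\{x\}+2S\big)$. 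You instead define $S$ intrinsically as the halved even part of $T$, verify directly that it is a numerical semigroup, and prove the decomposition by subtracting one copy of $x$ from each odd element; the Coe hypothesis enters only at the very end, to place $\frac{x-1}{2}$ and $\frac{x-1}{2}+1$ in $S$. Your version buys generality and transparency: the identity $T=2S\cup\big(\{x\}+2S\big)$ is established for an arbitrary numerical semigroup with a unique odd minimal generator, Coe or not (for instance $T=\langle 4,5\rangle=2\langle 2,5\rangle\cup\big(\{5\}+2\langle 2,5\rangle\big)$, where the Coe conclusion $\{2,3\}\subseteq S$ fails), and you avoid the gcd computation entirely. The paper's version buys an explicit generating set for $S$ read off from $\msg(T)$, which meshes with the generator formula $\msg(T)=2\msg(S)\cup\{2s+1\}$ exploited later in Proposition 50; of course, for Coe-semigroups the two constructions of $S$ coincide, since the Coe property forces the even part of $T$ to equal $\langle A\rangle$.
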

\begin{proof}
If $A=\left\{a\in\msg (T) ~|~ a ~ \text{is even}\right\}$, then $\msg (T)=A\cup\{x\}$. Since $T$ is an Coe-semigroup, then $\left\{x-1,x+1\right\}\subseteq T$ and so  $\left\{x-1,x+1\right\}\subseteq \langle A\rangle$. As $\gcd\left\{x-1,x+1\right\}=2$, then  $\gcd\left\{A\right\}=2$ and therefore $S=\langle\left\{\frac{a}{2} ~|~ a\in A\right\}\rangle$ is a numerical semigroup.

Since $\left\{x-1,x+1\right\}\subseteq \langle A\rangle$, then $2x=(x-1)+(x+1)\in \langle A\rangle$  and we obtain that $T=\langle A\rangle\cup \big(\{x\}+\langle A\rangle\big)$. Besides, we have that $\langle A\rangle=2S$, then  $T=2S\cup \big(\{x\}+2S\big)$. Finally,  if $\left\{x-1,x+1\right\}\subseteq \langle A\rangle$, then  we get that $\left\{\frac{x-1}{2},\frac{x+1}{2}\right\}= \left\{\frac{x-1}{2},\frac{x-1}{2}+1\right\}\subseteq S$.
\end{proof}

As a consequence of  Lemmas \ref{47} and \ref{48}, we obtain the next result.

\begin{theorem}\label{49}
 If $S$ is a numerical semigroup and $\left\{s,s+1\right\}\subseteq S$, then $T=2S\cup \big(\{2s+1\}+2S\big)$ is an Coe-semigroup with an unique odd minimal generator. Moreover, every Coe-semigroup with an unique odd minimal generator is of this form.
\end{theorem}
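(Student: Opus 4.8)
The plan is to recognize that Theorem \ref{49} is essentially a packaging of the two preceding lemmas, so the real work has already been done and what remains is to assemble the two directions cleanly. The statement has two halves: the constructive half (any $S$ with $\{s,s+1\}\subseteq S$ yields an Coe-semigroup $T=2S\cup(\{2s+1\}+2S)$ having a unique odd minimal generator) and the structural half (every Coe-semigroup with a unique odd minimal generator arises this way). First I would dispose of the constructive half by simply invoking Lemma \ref{47}, which states precisely that under the hypothesis $\{s,s+1\}\subseteq S$ the set $T=2S\cup(\{2s+1\}+2S)$ is an Coe-semigroup whose only odd minimal generator is $2s+1$. No further computation is needed here.

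For the structural half, I would take an arbitrary Coe-semigroup $T$ with a unique odd minimal generator $x$ and feed it into Lemma \ref{48}. That lemma produces a numerical semigroup $S$ with $T=2S\cup(\{x\}+2S)$ and $\{\frac{x-1}{2},\frac{x-1}{2}+1\}\subseteq S$. The only adjustment is notational: setting $s=\frac{x-1}{2}$ gives $x=2s+1$ and $\{s,s+1\}\subseteq S$, so that $T=2S\cup(\{2s+1\}+2S)$ is exactly the asserted form. One should also note the degenerate case $T=\N$, which is excluded from Lemma \ref{48} but is itself trivially of the desired form (taking $S=\N$ and $s=0$, since $\N=2\N\cup(\{1\}+2\N)$), so I would dispatch that case separately with a one-line remark to keep the ``every Coe-semigroup'' claim fully general.

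Since both constituent lemmas are already proved in the excerpt, I do not expect any genuine obstacle; the proof is a straightforward combination. If anything deserves care, it is making sure the correspondence $s\leftrightarrow x$ via $x=2s+1$ is stated explicitly so the reader sees that the two parametrizations $\{s,s+1\}\subseteq S$ and ``$x$ is the unique odd minimal generator'' match up, and confirming that the uniqueness of the odd minimal generator asserted in the ``every $\ldots$ is of this form'' clause is precisely what Lemma \ref{47} delivers in the forward direction. Thus the write-up would be essentially: ``The first assertion is Lemma \ref{47}. For the converse, if $T$ is such a semigroup with unique odd minimal generator $x$, Lemma \ref{48} gives $S$ with $T=2S\cup(\{x\}+2S)$ and $\{\frac{x-1}{2},\frac{x-1}{2}+1\}\subseteq S$; writing $s=\frac{x-1}{2}$ yields the claimed form.''
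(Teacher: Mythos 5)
Your proposal is correct and matches the paper exactly: the paper's proof of Theorem \ref{49} is precisely the one-line observation that it follows from Lemmas \ref{47} and \ref{48}, with the identification $x=2s+1$ left implicit. Your explicit treatment of the degenerate case $T=\N$ (which Lemma \ref{48} excludes, yet which has unique odd minimal generator $1$ and equals $2\N\cup\big(\{1\}+2\N\big)$) is in fact slightly more careful than the paper's own write-up.
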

\medskip

\begin{proposition}\label{50}
Let $S$ be a numerical semigroup, and $\left\{s,s+1\right\}\subseteq S$, then $T=2S\cup \big(\{2s+1\}+2S\big)$. Then the following conditions hold:
\begin{enumerate}
\item $\m(T)=2\m(S)$.
\item  $\F(T)=2\F(S)+(2s+1)$.
\item $\g(T)=2\g(S)+s$.
\item  $\msg(T)=(2\msg(S))\cup\{2s+1\}$.
\item $\e(T)=\e(S)+1$.
\end{enumerate}
\end{proposition}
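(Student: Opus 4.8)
The plan is to exploit the explicit two-part description of $T$: its even elements are exactly $\{2n : n\in S\}=2S$, and its odd elements are exactly $\{2s+1+2n : n\in S\}$, the two families being disjoint by parity. First I would record the membership tests that drive everything: for $n\in\N$ one has $2n\in T \iff n\in S$, while for $t\in\N$ one has $2t+1\in T \iff t-s\in S$. Throughout I assume $S\neq\N$ (if $S=\N$ then $T=\N$, a degenerate case for which some of the formulas fail); then $1\notin S$ forces $s\geq 2$, so $2s+1=s+(s+1)$ is a sum of two nonzero elements of $S$, each strictly smaller than $2s+1$. This shows $2s+1\notin\msg(S)$, a fact I will need in part (4).

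For (1), the nonzero even elements of $T$ have minimum $2\m(S)$ and the odd ones have minimum $2s+1$; since $s\in S\setminus\{0\}$ gives $s\geq\m(S)$, we get $2s+1>2\m(S)$, so $\m(T)=2\m(S)$. For (2) and (3) I count and compare gaps. The even gaps of $T$ are $\{2n : n\in\N\setminus S\}$, which biject with the $\g(S)$ gaps of $S$ and have largest element $2\F(S)$. The odd gaps are $\{2t+1 : t\geq 0,\ t-s\notin S\}$; splitting the range into $0\leq t<s$ (all gaps, contributing $s$ of them since $t-s<0$) and $t\geq s$ (a gap iff $t-s$ is a gap of $S$, contributing $\g(S)$), the largest odd gap is $2(\F(S)+s)+1$. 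Comparing the two largest gaps gives $\F(T)=2\F(S)+(2s+1)$, and adding the two counts gives $\g(T)=\g(S)+\bigl(s+\g(S)\bigr)=2\g(S)+s$.

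For (4), Lemma~\ref{47} already provides that $2\msg(S)\cup\{2s+1\}$ generates $T$ and that $2s+1$ is its unique odd minimal generator, so only minimality remains. The generator $2s+1$ cannot be dropped, since $\langle 2\msg(S)\rangle=2S$ contains only even numbers. For a fixed $n_i\in\msg(S)$, suppose $2n_i$ lay in the monoid generated by the remaining listed generators; by parity $2s+1$ would occur an even number $2k$ of times, so dividing by $2$ gives $n_i=k(2s+1)+\sum_{j\neq i}c_j n_j$. Substituting a minimal expansion $2s+1=\sum_l e_l n_l$ (which has at least two terms, as $2s+1\notin\msg(S)$) and comparing the coefficient of $n_i$, I would reach a contradiction to $n_i\in\msg(S)$ in each case: if $e_i=0$ the identity expresses $n_i$ through generators other than $n_i$, while if $e_i\geq 1$ the inequality $(1-ke_i)n_i\geq 0$ forces $k=e_i=1$ and all other coefficients to vanish, giving the impossible $2s+1=n_i$. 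Hence $\msg(T)=2\msg(S)\cup\{2s+1\}$. Part (5) is then immediate: the map $n\mapsto 2n$ is injective and $2s+1$ is odd (so not in $2\msg(S)$), whence $\e(T)=|\msg(T)|=|\msg(S)|+1=\e(S)+1$.

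The main obstacle is the minimality half of (4). The redundancy $2(2s+1)=(2s+1)+(2s+1)$ shows that $2n_i$ could genuinely fail to be a minimal generator precisely when $n_i=2s+1$, so the argument must first secure $2s+1\notin\msg(S)$ from the hypothesis $\{s,s+1\}\subseteq S$ together with $s\geq 2$; once this is in place, the parity bookkeeping and coefficient comparison finish the job, and all remaining parts are routine consequences of the membership tests.
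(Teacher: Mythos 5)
Your overall strategy coincides with the paper's own proof: parts (1)--(3) are done by splitting $T$ into its even part $2S$ and its odd part $\{2s+1\}+2S$ and counting gaps of each parity (your count $\g(T)=\g(S)+(s+\g(S))$ is exactly the paper's), and part (4) uses the same key trick: in any representation of $2n_i$ the generator $2s+1$ must occur an even number of times, so one can halve and land in $S$, where the final contradiction is that $2s+1=s+(s+1)$ is a sum of two nonzero elements of $S$ and hence not a minimal generator. Your coefficient-comparison version of this last step is sound, with one small bookkeeping omission: in the case $e_i\geq 1$ the inequality $(1-ke_i)n_i\geq 0$ also allows $k=0$, not only $k=e_i=1$; but $k=0$ gives $n_i=\sum_{j\neq i}c_jn_j$, which is precisely the contradiction you already invoked in the case $e_i=0$, so nothing essential is missing there.

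The one genuine error is the sentence with which you dispose of $S=\N$: it is false that $S=\N$ forces $T=\N$. For $S=\N$ and $s=1$ one gets $T=2\N\cup\big(\{3\}+2\N\big)=\langle 2,3\rangle\neq\N$, and in general $S=\N$, $s=k\geq 1$ gives $T=\langle 2,2k+1\rangle$; these are exactly the embedding-dimension-two Coe-semigroups of Section 7, and the proposition is true for them, so your blanket exclusion leaves a legitimate case of the statement unproved. The only degenerate case is $s=0$ (which does force $S=\N$ and $T=\N$, where items (1), (4) and (5) fail) --- an edge case the paper's statement and proof silently ignore, so your instinct that some restriction is needed was correct, but the right restriction is $s\neq 0$, not $S\neq\N$. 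The repair is immediate: your argument only needs $s\geq 1$, not $s\geq 2$, since $2s+1=s+(s+1)$ is a sum of two nonzero elements of $S$ as soon as $s\neq 0$; with that weakening every step of your proof applies whenever $s\geq 1$, including $S=\N$ (alternatively, that excluded case can be checked directly from $T=\langle 2,2s+1\rangle$ and Lemma~\ref{56}).
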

\begin{proof}
\begin{enumerate}
\item  Trivial.
\item  It is sufficient to see that all even elements greater that $2\F(S)$ and all odd elements greater that $2s+1+2\F(S)$ belongs to $T$.
Since $2s+1+2\F(S)\not\in T$, then $\F(T)=2s+1+2\F(S)$
\item The cardinality of the set of the even elements not  in $T$ is equal to $\g(S)$. The set of the odd elements not in $T$ is equal to $\left\{2k+1 ~|~k\in\left\{0,1,\ldots, s-1\right\}\right\}\cup\left\{2s+1+2x ~|~x\in \N\backslash S\right\}$. Hence, we get that $\g(T)=2\g(S)+s$.
\item  If $\msg (S)=\left\{n_1,\ldots,n_p\right\}$, then $T=\langle 2n_1,\ldots,2n_p, 2s+1\rangle$. In fact, $2s+1\not\in \langle 2n_1,\ldots,2n_p\rangle$. In order to conclude the proof, it suffices to show that $2n_1\not\in \langle 2n_2,\ldots,2n_p, 2s+1\rangle$. Suppose that there exists $\left\{\lambda_1,\lambda_2, \ldots, \lambda_p\right\}\subseteq \N$ such that $2n_1=\lambda_1(2s+1)+\lambda_2(2n_2)+\cdots +\lambda_p(2n_p)$. Hence, we get  that $\lambda_1$ is even and $n_1=\frac{\lambda_1}{2}(2s+1)+\lambda_2 n_2+\cdots +\lambda_p n_p$, with $\left\{\frac{\lambda_1}{2},\lambda_2, \ldots, \lambda_p\right\}\subseteq \N$. Since $\msg (S)=\left\{n_1,\ldots,n_p\right\}$ and  $2s+1\in S$, then $n_1= 2s+1$. We have that $s=\frac{n_1-1}{2}$, and as $\left\{s,s+1\right\}\subseteq S$, then $\left\{\frac{n_1-1}{2},\frac{n_1+1}{2} \right\}\subseteq S\backslash\{0\}$ . Therefore, $\frac{n_1-1}{2}+\frac{n_1+1}{2}=n_1$ and so $n_1\not\in \msg (S)$, a contradiction.
\item  Follow directly from the $(4)$.
\end{enumerate}
\end{proof}

\begin{example}\label{51}
If $S=\langle 5,7,9\rangle$ then $\m(S)=5$,  $\F(S)=13$, $\g(S)=8$, $\msg(S)=\left\{5,7,9\right\}$ and $\e(S)=3$.
Since $\left\{14,15\right\}\subseteq S$, then by Theorem \ref{49}, $T=2S\cup \big(\{29\}+2S\big)$ is an Coe-semigroup. By Proposition \ref{50}, we obtain that $m(T)=10$,  $\F(T)=55$, $\g(T)=30$, $\msg(T)=\left\{10,14,18,29\right\}$ and $\e(T)=4$.
\end{example}

Let $S$ a numerical semigroup. We say that $s$ is a small element in $S$ if $s<\F(S)$. Denote by $N(S)$  the set of all small elements in $S$ and by $n(S)$ its  cardinality. Note that $\F(S)+1=\g(S)+\n(S)$.

In 1978, H. S. Wilf (see \cite{wilf})  conjectured upper bound for $\g(S)$, namely,   $\g(S)\leq (\e(S)-1) n(S)$. Nowadays, the Wilf's conjecture remains unanswered, but for some families of numerical semigroups this conjecture is known to be true (see for example, \cite{delgado}).

\begin{proposition}\label{52}
Let $S$ be a numerical semigroup, $\left\{s,s+1\right\}\subseteq S$ and $T=2S\cup \big(\{2s+1\}+2S\big)$. If $S$ verifies the Wilf's conjecture, then $T$ also verifies the same conjecture. 
\end{proposition}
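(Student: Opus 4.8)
The plan is to reduce Wilf's inequality for $T$ directly to Wilf's inequality for $S$ by substituting the explicit formulas from Proposition \ref{50}. The first thing I would do is record the value of $\n(T)$, which is the one quantity not listed in Proposition \ref{50}. Using the relation $\F(T)+1 = \g(T) + \n(T)$ together with the corresponding identity $\F(S)+1 = \g(S)+\n(S)$ and the formulas $\F(T) = 2\F(S) + (2s+1)$ and $\g(T) = 2\g(S) + s$, a short computation yields
\[
\n(T) = \F(T) + 1 - \g(T) = 2\big(\F(S) + 1 - \g(S)\big) + s = 2\n(S) + s.
\]

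Next, Wilf's conjecture for $T$ reads $\g(T) \le (\e(T)-1)\,\n(T)$. Substituting $\g(T) = 2\g(S) + s$, $\e(T) = \e(S)+1$, and $\n(T) = 2\n(S) + s$, this is equivalent to the single inequality $2\g(S) + s \le \e(S)\big(2\n(S) + s\big)$. I would then invoke the hypothesis that $S$ satisfies Wilf's conjecture, namely $\g(S) \le (\e(S)-1)\,\n(S)$. Doubling this gives $2\g(S) \le 2\e(S)\n(S) - 2\n(S)$, whence $2\g(S) + s \le 2\e(S)\n(S) - 2\n(S) + s$. It therefore suffices to check that $2\e(S)\n(S) - 2\n(S) + s \le 2\e(S)\n(S) + s\,\e(S)$, which simplifies to $s\big(1 - \e(S)\big) \le 2\n(S)$.

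This last inequality is where the argument closes, and it is immediate: since $s \ge 0$ and $\e(S) \ge 1$ for every numerical semigroup, the left-hand side is nonpositive while $\n(S) \ge 0$, so the inequality holds. Assembling the chain of inequalities then gives $\g(T) \le (\e(T)-1)\,\n(T)$, as required. I do not expect any genuine obstacle here: the whole proof is a routine substitution followed by a trivial sign check. The only point deserving a moment of care is the derivation of $\n(T) = 2\n(S) + s$, since it is the one invariant that must be computed from the Frobenius and gender formulas rather than read off directly from Proposition \ref{50}.
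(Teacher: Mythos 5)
Your proof is correct and follows essentially the same route as the paper: compute $\n(T)=2\n(S)+s$ from $\F(T)+1=\g(T)+\n(T)$ and Proposition \ref{50}, reduce Wilf's inequality for $T$ to $2\g(S)+s\leq \e(S)\big(2\n(S)+s\big)$, and close with Wilf for $S$ plus the nonnegativity of $s$, $\n(S)$, and $\e(S)-1$. The only difference is cosmetic: the paper divides the reduced inequality by $2$ and absorbs the slack into the terms $\n(S)+\frac{(\e(S)-1)s}{2}$, while you keep it undivided and check $s\big(1-\e(S)\big)\leq 2\n(S)$.
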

\begin{proof}
As $\n(T)=\F(T)+1-\g(T)$, then by Proposition \ref{50}, we have that $\n(T)=2\F(S)+(2s+1)+1-(2\g(S)+s)=2(\F(S)+1-\g(S))+s=2\n(S)+s$. We have that $T$ verifies the Wilf's conjecture if \\ 
\[\g(T)\leq (\e(T)-1) n(T) \Longleftrightarrow\]  \[ 2\g(S)+s \leq \e(S)(2\n(S)+s) \Longleftrightarrow \]  \[\g(S)\leq (\e(S)-1)\n(S)+\n(S)+\frac{(\e(S)-1)s}{2}.\]
Since  $S$ verifies the Wilf's conjecture, then $\g(S)\leq (\e(S)-1) n(S)$, $\n(S)\geq 0$ and $\frac{(\e(S)-1)s}{2}\geq 0$ and therefore $T$ also verifies the same conjecture. 
\end{proof}

\begin{example}\label{53}
If $S$ is a numerical semigroup with $\e(S)=3$, then by  \cite[Theorem $2.11$]{doobs}, we have that $S$ verifies the Wilf's conjecture. If $\left\{s,s+1\right\}\subseteq S$, then by Proposition \ref{50}, we obtain that $T=2S\cup \big(\{2s+1\}+2S\big)$ is an Coe-semigroup which verifies he Wilf's conjecture.   
\end{example}

We finish this section by studying the class of Coe-semigroups with embedding dimension $1$, $2$ and $3$. Clearly, the numerical semigroups, $\N$ and $\langle 2,2k+1\rangle$ with $k\in \N\backslash\{0\}$, are all Coe-semigroups with embedding dimension $1$ and $2$.

\begin{lemma}\label{54}
If $S$ is an Coe-semigroup with $\e(S)=3$, then $S$ has an  unique odd minimal generator.
\end{lemma}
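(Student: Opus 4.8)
The plan is to count the minimal generators by parity. Since $\e(S)=3$ we have $S\neq\N$, so Proposition \ref{1} gives that $\m(S)$ is even; as $\m(S)\in\msg(S)$, at least one minimal generator is even. Moreover not all three can be even, for otherwise $\gcd(\msg(S))$ would be even and $S=\langle\msg(S)\rangle$ would fail to be a numerical semigroup. Hence $S$ has either one or two odd minimal generators (three is impossible, since then $\m(S)$ would be odd). It therefore suffices to rule out the case of exactly two odd minimal generators.

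So I would argue by contradiction: assume $\msg(S)=\{m,x,y\}$ with $m=\m(S)$ even and $x<y$ both odd, so that $m<x<y$. Because $S$ is an Coe-semigroup, Proposition \ref{4} gives $\{x-1,x+1\}\subseteq S$, and both $x-1$ and $x+1$ are even. The key step is a size argument. Since $0<x-1<x<y$, the element $x-1$ is a positive element of $S$ strictly smaller than $x$, so any expression of it as a nonnegative combination of $m,x,y$ can only use $m$, whence $x-1=k\,m$ for some $k\geq 1$. Similarly, since $x$ and $y$ are both odd with $x<y$ we have $x+1<y$, and in an expansion of $x+1$ the generator $x$ cannot appear (the leftover $1$ is not in $\langle m\rangle$) nor can $y$, so $x+1=l\,m$ for some $l>k$.

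Subtracting the two relations yields $2=(x+1)-(x-1)=(l-k)\,m$, so $m\mid 2$ and, being even, $m=2$. But $m=2$ forces $\langle 2\rangle\subseteq S$, so every even integer lies in $S$ and only the smallest odd element of $S$ can be a minimal generator; hence $\e(S)\leq 2$, contradicting $\e(S)=3$. This contradiction shows that $S$ cannot have two odd minimal generators, and together with the parity count above it has exactly one.

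The step I expect to be the main obstacle is the size argument in the middle paragraph, namely carefully justifying that neither $x-1$ nor $x+1$ can involve the generators $x$ or $y$ in its minimal-generator expansion. This rests on the orderings $m<x$ and $x+1<y$ — the latter crucially using that $x$ and $y$ are both odd, so $y\geq x+2$ — which must be pinned down before subtracting.
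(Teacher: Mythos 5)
Your proof is correct and takes essentially the same approach as the paper's: assume two odd minimal generators $x<y$, show that $x-1$ and $x+1$ both lie in $\langle \m(S)\rangle$, deduce $\m(S)=2$, and contradict $\e(S)=3$. In fact you supply details the paper's terser proof leaves implicit, namely the ordering argument ($\m(S)<x$ and $x+1<y$) justifying that $x\pm 1$ are multiples of $\m(S)$, and the explicit parity count showing at least one and at most two minimal generators are odd.
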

\begin{proof}
 If $\msg (S)=\left\{n_1<n_2<n_3\right\}$, then by Proposition \ref{1}, we get that $n_1$ is an even integer. If $n_2$ and $n_3$ are two odd integers, as $\left\{n_2-1,n_2+1\right\}\subseteq T$, then $\left\{n_2-1,n_2+1\right\}\subseteq \langle n_1\rangle$ and so $n_1=2$. Therefore, $\m(S)=2$ and $\e(S)\leq \m(S)=2$ a contradiction.
\end{proof}

\begin{proposition}\label{55}
Let $a$ and $b$ be positive integers  such that $2\leq a<b$, $\gcd\{a,b\}=1$ and $\left\{s,s+1\right\}\subseteq \langle a,b\rangle$. Then  $T=\langle 2a,2b, 2s+1\rangle$ is  an Coe-semigroup with embedding dimension $3$.  Moreover, every Coe-semigroup with embedding  dimension $3$ is of this form.
\end{proposition}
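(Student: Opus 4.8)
The plan is to deduce this entirely from the structural results of this section, since after Lemma \ref{54} a Coe-semigroup of embedding dimension $3$ is exactly a Coe-semigroup with a unique odd minimal generator whose associated numerical semigroup has embedding dimension $2$. So the proposition should be read as the embedding-dimension-$3$ instance of Theorem \ref{49}.

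For the first assertion (sufficiency), I would start from $S=\langle a,b\rangle$. The hypotheses $2\leq a<b$ and $\gcd\{a,b\}=1$ guarantee that $S$ is a numerical semigroup and that $\{a,b\}$ is its minimal system of generators, so $\e(S)=2$; indeed $a\notin\langle b\rangle$ because $0<a<b$, and $b\notin\langle a\rangle$ because $\gcd\{a,b\}=1$ with $a\geq 2$ forbids $a\mid b$. Since $\{s,s+1\}\subseteq S$, Theorem \ref{49} yields that $T=2S\cup\big(\{2s+1\}+2S\big)$ is a Coe-semigroup with a unique odd minimal generator, and the generating-set description in Lemma \ref{47} identifies $T$ with $\langle 2a,2b,2s+1\rangle$. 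Finally, Proposition \ref{50}(5) gives $\e(T)=\e(S)+1=3$.

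For the converse (necessity), I would let $T$ be any Coe-semigroup with $\e(T)=3$; in particular $T\neq\N$. By Lemma \ref{54}, $T$ has a unique odd minimal generator $x$. Applying the ``moreover'' part of Theorem \ref{49} (equivalently Lemma \ref{48}), there is a numerical semigroup $S$ with $T=2S\cup\big(\{x\}+2S\big)$ and $\big\{\frac{x-1}{2},\frac{x-1}{2}+1\big\}\subseteq S$; setting $s=\frac{x-1}{2}$ we have $x=2s+1$ and $\{s,s+1\}\subseteq S$. Proposition \ref{50}(5) then forces $\e(S)=\e(T)-1=2$, so $S$ admits a minimal system of generators $\{a,b\}$ with $2\leq a<b$ and $\gcd\{a,b\}=1$. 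Substituting $S=\langle a,b\rangle$ back gives $T=2S\cup\big(\{2s+1\}+2S\big)=\langle 2a,2b,2s+1\rangle$, which is precisely the claimed form.

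I do not expect a genuine obstacle here: all the heavy lifting (the structure theorem, the invariant formula $\e(T)=\e(S)+1$, and the reduction to a single odd generator) has already been carried out. The only points needing care are the bookkeeping that $\e(S)=2$ is equivalent to $S=\langle a,b\rangle$ with the stated constraints, and the verification that the two descriptions $2S\cup\big(\{2s+1\}+2S\big)$ and $\langle 2a,2b,2s+1\rangle$ of $T$ coincide; both are immediate from Lemma \ref{47} and Proposition \ref{50}.
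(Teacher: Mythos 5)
Your proposal is correct and follows exactly the paper's route: the paper proves Proposition \ref{55} by citing Theorem \ref{49}, Proposition \ref{50} and Lemma \ref{54} as an immediate consequence, which is precisely the combination you use (with Lemmas \ref{47} and \ref{48} being the two halves of Theorem \ref{49}). Your write-up simply makes explicit the bookkeeping---$\e(S)=2$ iff $S=\langle a,b\rangle$ with the stated constraints, and the identification $2S\cup\big(\{2s+1\}+2S\big)=\langle 2a,2b,2s+1\rangle$---that the paper leaves implicit.
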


\begin{proof}
As an immediate consequence of   Theorem \ref{49}, Proposition \ref{50} and Lemma \ref{54}.
\end{proof}

The next result appears in  \cite{Sylvester}.

\begin{lemma}\label{56}
Let $a$ and $b$ be positive integers  such that $2\leq a<b$, $\gcd\{a,b\}=1$. Then:
\begin{enumerate}
  \item $\F(\langle a,b\rangle)=ab-a-b$.
 \item  $\g(\langle a,b\rangle)=\frac{(a-1)(b-1)}{2}$.
\end{enumerate}
\end{lemma}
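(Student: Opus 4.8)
The plan is to route everything through the Apéry set $\Ap(S,a)$ of $S=\langle a,b\rangle$ with respect to $a$, and then feed it into the two classical Apéry-based formulas for the Frobenius number and the gender. This is the cleanest path because Lemma \ref{44} already hands us the description of an Apéry set as the collection of least elements in each residue class, which is exactly the structural fact I need.

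First I would determine $\Ap(S,a)$ explicitly. Since $\gcd\{a,b\}=1$, the residue $b$ is invertible modulo $a$, so the multiples $0,b,2b,\ldots,(a-1)b$ run through all $a$ residue classes modulo $a$. I claim each $jb$ with $0\leq j\leq a-1$ is the least element of $S$ lying in its own class: if $\alpha a+\beta b\in S$ (with $\alpha,\beta\in\N$) is congruent to $jb$ modulo $a$, then $\beta b\equiv jb \pmod a$ forces $\beta\equiv j \pmod a$, hence $\beta\geq j$, whence $\alpha a+\beta b\geq jb$. By Lemma \ref{44} this identifies
\[\Ap(S,a)=\{0,b,2b,\ldots,(a-1)b\},\]
with the values $w(0),w(1),\ldots,w(a-1)$ being a reordering of these $a$ numbers according to their residues.

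Next I would invoke the two standard Apéry formulas, valid for any numerical semigroup $S$ and any $n\in S\backslash\{0\}$,
\[\F(S)=\max \Ap(S,n)-n, \qquad \g(S)=\frac{1}{n}\sum_{w\in \Ap(S,n)}w-\frac{n-1}{2}.\]
If these are not to be taken as known, the short justification runs through Lemma \ref{44}: in the residue class of $w(i)$ the elements of $S$ are exactly $w(i),w(i)+n,w(i)+2n,\ldots$, so the largest integer outside $S$ is $\max_i w(i)-n$, while the gaps in that class are $i,i+n,\ldots,w(i)-n$, numbering $(w(i)-i)/n$; summing over $i\in\{0,\ldots,n-1\}$ and using $\sum_i i=\frac{n(n-1)}{2}$ gives the gender formula.

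Finally I would substitute $n=a$ and the Apéry set computed in Step~1. Here $\max \Ap(S,a)=(a-1)b$, so $\F(S)=(a-1)b-a=ab-a-b$, which is part~(1). For part~(2), $\sum_{w\in\Ap(S,a)}w=b\big(0+1+\cdots+(a-1)\big)=\frac{ab(a-1)}{2}$, hence $\g(S)=\frac{b(a-1)}{2}-\frac{a-1}{2}=\frac{(a-1)(b-1)}{2}$. The only genuine obstacle is Step~1, the explicit determination of $\Ap(S,a)$: once the minimality argument pinning each class representative to $jb$ is in place, parts~(1) and~(2) are immediate substitutions, so the remaining work is routine.
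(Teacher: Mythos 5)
Your proof is correct, but there is nothing in the paper to compare it against step by step: the paper gives no proof of this lemma at all, treating it as Sylvester's classical 1884 result and simply citing the reference. Your argument is the standard modern Ap\'ery-set proof, and it is sound throughout. The identification $\mathrm{Ap}(\langle a,b\rangle,a)=\{0,b,2b,\ldots,(a-1)b\}$ is justified correctly: coprimality makes $0,b,\ldots,(a-1)b$ a transversal of the residues modulo $a$, and your minimality argument (from $\beta b\equiv jb\pmod{a}$ deduce $\beta\equiv j\pmod{a}$, hence $\beta\geq j$ because $0\leq j\leq a-1$, hence $\alpha a+\beta b\geq jb$) pins each $jb$ as the least element of its class, so Lemma \ref{44} applies. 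The two general formulas $\F(S)=\max \mathrm{Ap}(S,n)-n$ and $\g(S)=\frac{1}{n}\sum_{w\in \mathrm{Ap}(S,n)}w-\frac{n-1}{2}$ are genuine (they are the standard Ap\'ery-set formulas, usually attributed to Brauer--Shockley and Selmer), and your short derivation of both from Lemma \ref{44} is complete: the elements of $S$ in the class of $w(i)$ are exactly $w(i)+kn$ with $k\geq 0$, so the largest gap in that class is $w(i)-n$ and the gaps in that class number $(w(i)-i)/n$. The closing substitutions, $(a-1)b-a=ab-a-b$ and $\frac{b(a-1)}{2}-\frac{a-1}{2}=\frac{(a-1)(b-1)}{2}$, are right. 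What your route buys over the paper's bare citation is self-containedness at essentially no cost, since it reuses Lemma \ref{44}, which the paper has already stated for other purposes (the MED computations of Section 6).
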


By applying Propositions \ref{50}, \ref{55} and Lemma \ref{56}, we obtain the next result.

\begin{proposition}\label{57}
If $S$ is an Coe-semigroup, $\msg(S)=\left\{n_1, n_2, n_3\right\}$ and $n_3$ an odd integer, then:
\begin{enumerate}
  \item $\F(S)=n_3+\frac{n_1n_2}{2}-n_1-n_2$.
 \item  $\g(S)=\frac{n_3-1}{2}+(\frac{n_1}{2}-1)(\frac{n_2}{2}-1)$.
\end{enumerate}
\end{proposition}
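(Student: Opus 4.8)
The plan is to combine the structural classification of embedding-dimension-three Coe-semigroups with the transfer formulas already established in this section. By Lemma \ref{54}, a Coe-semigroup with $\e(S)=3$ has exactly one odd minimal generator; since we are told that $n_3$ is odd, the remaining generators $n_1$ and $n_2$ must be even. Proposition \ref{55} then guarantees that $S$ can be written as $S=\langle 2a,2b,2s+1\rangle$ with $2\le a<b$, $\gcd\{a,b\}=1$ and $\{s,s+1\}\subseteq \langle a,b\rangle$, and (via Theorem \ref{49}) that $S=2R\cup\big(\{2s+1\}+2R\big)$ for the numerical semigroup $R=\langle a,b\rangle$. Matching generators yields the dictionary $\{n_1,n_2\}=\{2a,2b\}$ and $n_3=2s+1$, that is $a=\frac{n_1}{2}$, $b=\frac{n_2}{2}$ and $s=\frac{n_3-1}{2}$ (up to swapping $n_1$ and $n_2$, which is harmless since the target formulas are symmetric in $n_1,n_2$).

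Next I would invoke Proposition \ref{50} applied to the numerical semigroup $R$, whose hypothesis $\{s,s+1\}\subseteq R$ holds by the previous step. This gives $\F(S)=2\F(R)+(2s+1)$ and $\g(S)=2\g(R)+s$, reducing both invariants to those of the two-generated semigroup $R$. Since $R=\langle a,b\rangle$ with $\gcd\{a,b\}=1$ and $2\le a<b$, Lemma \ref{56} supplies $\F(R)=ab-a-b$ and $\g(R)=\frac{(a-1)(b-1)}{2}$.

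Finally, I would substitute these into the two transfer formulas and replace $a,b,s$ by their expressions in $n_1,n_2,n_3$. For the Frobenius number,
\[\F(S)=2(ab-a-b)+(2s+1)=\frac{n_1n_2}{2}-n_1-n_2+n_3,\]
and for the gender,
\[\g(S)=(a-1)(b-1)+s=\Big(\tfrac{n_1}{2}-1\Big)\Big(\tfrac{n_2}{2}-1\Big)+\frac{n_3-1}{2},\]
which are exactly the claimed identities.

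There is no serious obstacle here: the whole content is carried by Lemma \ref{54}, Proposition \ref{55}, Proposition \ref{50} and Lemma \ref{56}, and what remains is a routine substitution. The only points demanding care are (i) confirming that the generator named $n_3$ really is the \emph{unique} odd minimal generator, so that Proposition \ref{55} applies with $2s+1=n_3$ and $n_1,n_2$ both even; and (ii) checking that the labelling ambiguity between the two even generators $n_1,n_2$ does not affect the conclusion, which is immediate because both right-hand sides are symmetric under interchanging $n_1$ and $n_2$.
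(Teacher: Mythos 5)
Your proposal is correct and follows exactly the paper's own route: the paper proves this proposition precisely by combining Proposition \ref{55} (structure of embedding-dimension-three Coe-semigroups), Proposition \ref{50} (transfer formulas for $\F$ and $\g$) and Lemma \ref{56} (Sylvester's formulas), which is what you do, with the substitution carried out explicitly. The extra care you take about uniqueness of the odd generator and the symmetry in $n_1,n_2$ is sound but adds nothing beyond the paper's argument.
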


From   \cite[Lemma $2.14$]{libro} we know that if $S$ is a numerical semigroup, then $\g(S)\geq \frac{\F(S)+1}{2}$. Following the terminology introduced in \cite{kunz} a numerical semigroup is symmetric if  $\g(S)= \frac{\F(S)+1}{2}$.

By Proposition \ref{57}, we have the following result.

\begin{corollary}\label{58}
If $S$ is an Coe-semigroup with $\e(S)=3$, then $S$ is a symmetric numerical semigroup.
\end{corollary}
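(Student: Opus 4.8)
The plan is to reduce the statement to a direct algebraic verification using the explicit formulas for $\F(S)$ and $\g(S)$ recorded in Proposition \ref{57}. First I would invoke Lemma \ref{54}: an Coe-semigroup $S$ with $\e(S)=3$ has a \emph{unique} odd minimal generator. Hence I may write $\msg(S)=\{n_1,n_2,n_3\}$ with $n_1,n_2$ even and $n_3$ odd, which is precisely the labelling under which Proposition \ref{57} applies. (Since $\msg(S)$ is an unordered set, there is no loss in naming the odd generator $n_3$; this is the one small point that needs to be flagged so that Proposition \ref{57} is literally applicable.)

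Next I would substitute the two formulas
\[
\F(S)=n_3+\frac{n_1n_2}{2}-n_1-n_2, \qquad \g(S)=\frac{n_3-1}{2}+\Big(\tfrac{n_1}{2}-1\Big)\Big(\tfrac{n_2}{2}-1\Big)
\]
into the symmetry identity $\g(S)=\frac{\F(S)+1}{2}$ and check it by expansion. Writing out the product $\big(\tfrac{n_1}{2}-1\big)\big(\tfrac{n_2}{2}-1\big)=\tfrac{n_1n_2}{4}-\tfrac{n_1}{2}-\tfrac{n_2}{2}+1$, I expect both $\g(S)$ and $\frac{\F(S)+1}{2}$ to collapse to the common value $\tfrac{n_3}{2}+\tfrac{n_1n_2}{4}-\tfrac{n_1}{2}-\tfrac{n_2}{2}+\tfrac12$, so equality holds.

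To conclude, I would recall that \cite[Lemma $2.14$]{libro} gives $\g(S)\geq \frac{\F(S)+1}{2}$ for every numerical semigroup, and that (following \cite{kunz}) $S$ is symmetric exactly when this bound is attained with equality. The identity established above is precisely that equality, so $S$ is symmetric. There is no substantive obstacle here: once Lemma \ref{54} fixes the shape of $\msg(S)$, the corollary is an immediate arithmetic consequence of Proposition \ref{57}, and the only thing to monitor is the bookkeeping of which generators are even versus odd.
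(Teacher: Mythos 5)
Your proposal is correct and follows essentially the same route as the paper: the paper derives the corollary directly from Proposition \ref{57} (with Lemma \ref{54} implicitly justifying that the odd generator can be labelled $n_3$), leaving the arithmetic verification of $\g(S)=\frac{\F(S)+1}{2}$ to the reader. Your expansion, with both sides collapsing to $\tfrac{n_3}{2}+\tfrac{n_1n_2}{4}-\tfrac{n_1}{2}-\tfrac{n_2}{2}+\tfrac12$, is exactly that omitted computation and is correct.
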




\end{document}